\documentclass[12pt, a4paper]{amsart}
\usepackage[left=2.2cm, right=2.2cm, top=3cm, bottom=3.5cm]{geometry}
\usepackage{amsmath,tabu}
\usepackage{amssymb}
\usepackage{amsthm}
\usepackage[utf8]{inputenc} 
\usepackage[T1]{fontenc}
\usepackage{mathrsfs}
\usepackage{caption}
\usepackage{url}
\usepackage{enumitem}
\usepackage[toc,page]{appendix}
\usepackage{tabu}
\usepackage{longtable}

\usepackage{tikz}
\def\DynkinNodeSize{3.5mm}
\def\DynkinArrowLength{3mm}
\usetikzlibrary{arrows,decorations.markings}
\usetikzlibrary{arrows,decorations.markings}
\usetikzlibrary{shapes}
\usetikzlibrary{matrix}
\usetikzlibrary{graphs}
\usetikzlibrary{backgrounds}

\usepackage{tikz-cd}
\usetikzlibrary{arrows}

\tikzset{
commutative diagrams/.cd,
arrow style=tikz,
diagrams={>=latex}}

\begin{document}

\newtheorem*{thmn}{Theorem}
\newtheorem{thmx}{Theorem}
\newtheorem{thmy}{Theorem}
\renewcommand{\thethmy}{\Alph{thmy}}

\newtheorem{thm}{Theorem}[section]
\newtheorem{lem}[thm]{Lemma}
\newtheorem{prop}[thm]{Proposition}
\newtheorem{cor}[thm]{Corollary}
\newtheorem*{cor2}{Corollary}
\newtheorem*{conjecture}{Conjecture}
\theoremstyle{remark}
\newtheorem{remark}[thm]{Remark}
\theoremstyle{definition}
\newtheorem{notation}[thm]{Notation}
\newtheorem{definition}[thm]{Definition}

\renewcommand{\thethmx}{\Alph{thmx}}

\newenvironment{example}[1][Example]{\begin{trivlist}
\item[\hskip \labelsep {\bfseries #1}]}{\end{trivlist}}

\title[Bounds on the number of simple modules]{Bounds on the number of simple modules in blocks of finite groups of Lie type}

\date{\today}

\author{Ruwen Hollenbach}
\address{FB Mathematik, TU Kaiserslautern,
         67663 Kaiserslautern, Germany.}
\email{hollenbach@mathematik.uni-kl.de}

\keywords{number of simple modules, bad primes, inequalities for
  blocks of finite groups of Lie type}

\subjclass[2010]{20C15, 20C33}

\begin{abstract}
Let $G$ be a simple, simply connected linear algebraic group of exceptional type defined over $\mathbb{F}_q$ with Frobenius endomorphism $F: G \to G$. In this work we give upper bounds on the number of simple modules in the quasi-isolated $\ell$-blocks of $G^F$ and $G^F/Z(G^F)$ when $\ell$ is bad for $G$. 

\end{abstract}

\maketitle

\section{Introduction}
\noindent
Let $G$ be a simple, simply connected linear algebraic group of exceptional type over $\mathbb{F}_q$ with Frobenius endomorphism $F: G \to G$.
%, or of type $D_4$ with a Frobenius endomorphism $F$ such that $G^F=\text{} ^3\!D_4(q)$. 
Let $\ell$ be a prime not dividing $q$.

When $\ell$ is a good prime for $G$, explicit basic sets for the quasi-isolated $\ell$-blocks of $G^F$ were determined by the author in \cite{Ruwen} using \cite[Theorem A]{Geck2} and $e$-Harish-Chandra theory, as established by Cabanes--Enguehard in \cite{Cabanes-Enguehardtwisted}. In particular, when $\ell$ is good for $G$, we know the number of irreducible Brauer characters in these blocks.

When $\ell$ is bad for $G$, however, the assertion of \cite[Theorem A]{Geck2} no longer holds, and very little is known about the number of irreducible Brauer characters in the quasi-isolated $\ell$-blocks of $G^F$. 
%If $\ell$ is a good prime for $G$, using \cite[Theorem A]{Geck2} on basic sets of blocks of groups of Lie type and $e$-Harish-Chandra theory, as established by Cabanes--Enguehard in \cite{Cabanes-Enguehardtwisted}, we were able to determine explicit basic sets for all quasi-isolated blocks of $G^F$ in \cite{Ruwen}. Hence, the number of irreducible Brauer characters in these blocks is known. However, we barely know anything about the number of irreducible Brauer characters in $\ell$-blocks when $\ell$ is bad for $G$. The reason for this is that the assertion of \cite[Theorem A]{Geck2} does not hold anymore, and so far no general extension of that result has been found. \\
In this paper we give upper bounds for the number of irreducible Brauer characters in the quasi-isolated $\ell$-blocks of $G^F$ when $\ell$ is bad for $G$.  \\
%To this end we needed to find a replacement for \cite[Theorem A]{Geck2}. \\

%In the representation theory of finite groups there are many longstanding open problems, one of the oldest of which is Brauer's $k(B)$-conjecture. Let $H$ be a finite group and let $\ell$ be a prime dividing $|H|$. If $B$ is an $\ell$-block of $H$ with defect group $D$, then Brauer's $k(B)$-conjecture states that
%\begin{align*}
%k(B) \leq |D|,
%\end{align*}
%where $k(B)$ denotes the number of irreducible characters in $B$. Recently, Malle and Robinson proposed a modular version of this conjecture \cite[Conjecture 1]{Malle-Robinson}. It states that
%\begin{align*}
%l(B) \leq \ell^{s(D)},
%\end{align*}
%where $l(B)$ denotes the number of irreducible Brauer characters in $B$ and $s(D)$ denotes the $\ell$-sectional rank of $D$. 
%We are concerned with the blocks of quasi-simple finite groups. Malle and Robinson proved their conjecture for the blocks of many of these groups, and all outstanding cases are blocks of finite groups of Lie type in non-defining characteristic. \\

\noindent
For $s \in G^{*F}$, let $l_s$ denote the number of irreducible Brauer characters in $\mathcal{E}_\ell(G^F,s)$ (see Theorem \ref{Lusztigseries blocks}). We prove the following replacement of \cite[Theorem A]{Geck2}.
\begin{thm} \label{Theorem A}
Suppose that $G$ is a simple, simply connected linear algebraic group of exceptional type defined over $\mathbb{F}_q$ with Frobenius endomorphism $F: G \to G$.
%, or of type $D_4$ with a Frobenius endomorphism $F$ such that $G^F=\text{} ^3\!D_4(q)$. 
Let $\ell \nmid q$ be a bad prime for $G$. If $s \in G^{*F}$ is a semisimple quasi-isolated $\ell'$-element then 
\[
l_s \leq \begin{cases} 3 \text{ }|\mathcal{E}(G^F,s)|, \text{ if } G=E_6, \ell=3 \text{ and } C_{G^*}(s)=A_5 \times A_1, \\
			2 \text{ } |\mathcal{E}(G^F,s)|, \text{ if } G=E_7, \ell=2 \text{ and } C_{G^*}(s)=A_5 \times A_2, \\
			|\mathcal{E}(G^F,s)|+ \sum_{1 \neq t}|\mathcal{E}(G^F,st)|, \text{ otherwise};
\end{cases}
\] where $t$ runs over the $\ell$-elements of $C_{G^*}(s)^F$ such that 
\begin{enumerate}
\item $st$ is quasi-isolated; and
\item $C_{G^*}(st)^F \neq C_{G^*}^\circ(st)^F$ or $C_{G^*}^\circ(st)$ is not of type $A$.
\end{enumerate}
\end{thm}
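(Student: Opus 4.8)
The plan is to bound $l_s$ from above by exhibiting, inside the union of $\ell$-blocks $\mathcal{E}_\ell(G^F,s)$, a set $\Lambda$ of ordinary characters whose images under the decomposition map span $\mathbb{Q}\operatorname{IBr}$ of that union; since $\mathcal{E}_\ell(G^F,s)$ is a union of blocks, $l_s=\dim_{\mathbb{Q}}\mathbb{Q}\operatorname{IBr}(\mathcal{E}_\ell(G^F,s))$, so any such $\Lambda$ yields $l_s\le|\Lambda|$, and $\Lambda$ will be chosen so that $|\Lambda|$ equals the claimed bound. First I would fix the combinatorial skeleton. By Broué--Michel, $\mathcal{E}_\ell(G^F,s)$ is a union of $\ell$-blocks with $\mathcal{E}_\ell(G^F,s)\cap\operatorname{Irr}(G^F)=\bigsqcup_{[t]}\mathcal{E}(G^F,st)$, the union over $C_{G^*}(s)^F$-classes of $\ell$-elements $t\in C_{G^*}(s)^F$, and by Jordan decomposition of characters (Lusztig; Digne--Michel when $C_{G^*}(st)$ is disconnected) $|\mathcal{E}(G^F,st)|$ equals the number of unipotent characters of $C_{G^*}(st)^F$. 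By the kind of considerations used in the proof of \cite[Theorem A]{Geck2} (refining the above partition by generic degrees), the decomposition matrix of $\mathcal{E}_\ell(G^F,s)$ has a triangular shape compatible with it; in particular it suffices to show that for each $t$ violating (1) or (2), the rows indexed by $\mathcal{E}(G^F,st)$ lie in the $\mathbb{Q}$-span of the remaining rows.

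The core of the proof consists of two ``discarding'' steps, carried out by a simultaneous induction on $\dim G$ over a statement valid for all semisimple $\ell'$-elements (which for non-quasi-isolated elements reduces, via Bonnafé--Rouquier, to the quasi-isolated case in a proper Levi). If $st$ is not quasi-isolated, then $C_{G^*}(st)$ is contained in a proper $F$-stable Levi $L^*<G^*$, and by Bonnafé's work on Jordan decomposition of characters, $R^G_L$ restricts to a sign-preserving bijection $\mathcal{E}(L^F,st)\to\mathcal{E}(G^F,st)$ and commutes with the decomposition map; since $\dim L<\dim G$, the inductive hypothesis provides a spanning set for $\mathbb{Q}\operatorname{IBr}(\mathcal{E}_\ell(L^F,s))$ supported on the relevant parts, and applying $R^G_L$ shows the rows of $\mathcal{E}(G^F,st)$ are redundant. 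If instead $st$ is quasi-isolated with $C_{G^*}(st)$ connected of type $A$, then $\ell$ is a good prime for $C_{G^*}(st)$; using Bonnafé--Dat--Rouquier's block-theoretic Jordan decomposition (available here, since we are outside the two exceptional cases) together with the fact that the unipotent characters form a basic set of the unipotent blocks in type $A$ (Dipper--James, Geck, Gruber--Hiss), the good-prime arguments of \cite[Theorem A]{Geck2} and $e$-Harish-Chandra theory \cite{Cabanes-Enguehardtwisted} apply to this part and again make it redundant relative to the parts that are kept. What survives is precisely $\bigsqcup_{[t]}\mathcal{E}(G^F,st)$ with $t=1$ or $t$ satisfying (1)--(2), which is the ``otherwise'' bound. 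From here the argument is a finite verification: Bonnafé's classification of quasi-isolated semisimple classes in exceptional groups lists, for each $(G,\ell,s)$, the relevant $t$, the groups $C_{G^*}(st)$, and hence the numbers $|\mathcal{E}(G^F,st)|$.

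Finally, the two cases $(G,\ell)=(E_6,3)$ with $C_{G^*}(s)=A_5\times A_1$ and $(G,\ell)=(E_7,2)$ with $C_{G^*}(s)=A_5\times A_2$ must be treated separately, and this is where I expect the main obstacle. In both, $C_{G^*}(s)$ has a connected type-$A$ part, so the generic reasoning would ``want'' to give $l_s\le|\mathcal{E}(G^F,s)|$, but it breaks down because $\ell$ divides $|Z(G)^F|$ for the relevant $q$ (equivalently $C_{G^*}(s)$ is disconnected with component group of order $\ell$), so Bonnafé--Dat--Rouquier's block Jordan decomposition is not available and the triangular structure degrades across the $\ell$ blocks of $G^F$ lying over a single block of $G^F/Z(G^F)_\ell$. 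Here I would instead pass to a regular embedding $G\hookrightarrow\widetilde{G}$ with connected centre, where the dual centralizer becomes connected of type $A$ and the number of simple modules in the corresponding block of $\widetilde{G}^F$ is known to be $|\mathcal{E}(G^F,s)|$, and then descend by Clifford theory with respect to the normal subgroup $G^F\widetilde{G}^F$, whose index $[\widetilde{G}^F:G^FZ(\widetilde{G}^F)]$ equals $\ell\in\{2,3\}$ in these cases; the explicit distribution of $\mathcal{E}_\ell(G^F,s)$ into blocks of Cabanes--Enguehard and Kessar--Malle is used to keep track of which characters of $G^F$ restrict irreducibly to, or are induced from, that subgroup. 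Getting this Clifford-theoretic bookkeeping right, and ensuring that the inductive Levi recursion of the second paragraph terminates and never reintroduces a part that is neither relevant nor further reducible, are the delicate points.
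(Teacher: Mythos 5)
Your overall plan---exhibit a generating set for $\mathbb{Q}\operatorname{IBr}(\mathcal{E}_\ell(G^F,s))$ and count---is the paper's, but both the machinery for your two ``discarding'' steps and your treatment of the exceptional cases differ substantially from the paper, and several of your steps do not hold as stated. For the first discarding step ($st$ not quasi-isolated), the paper does not run an induction on $\dim G$: it uses Lemma \ref{t im Zentrum} to place $t\in Z(L^*)$ and then a single explicit computation (Theorem \ref{restriction}(a)), writing $\chi=\pm R^G_L(\theta_t\lambda)$ with $\theta_t$ an $\ell$-power order linear character of $L^F$ and $\lambda\in\mathcal{E}(L^F,s)$, and using that $R^G_L$ commutes with multiplication by $\gamma_{\ell'}$. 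Your phrasing that $R^G_L$ ``commutes with the decomposition map'' is not the fact that is needed or used; what is used is commutation with restriction to $\ell'$-elements. For the second step ($st$ quasi-isolated with $C_{G^*}(st)$ connected of type $A$), your appeal to Bonnafé--Dat--Rouquier is vacuous: since $st$ is quasi-isolated, $C_{G^*}(st)$ is not contained in any proper Levi, so the BDR equivalence gives $N^F=G^F$ and reduces nothing. The argument the paper actually uses here---that $C_{G^*}^\circ(st)$ of type $A$ with $C_{G^*}(st)^F=C_{G^*}^\circ(st)^F$ forces every member of $\mathcal{E}(G^F,st)$ to be uniform, so $\chi^\circ$ is a $\mathbb{Q}$-combination of the $R_{T^*}^G(s)^\circ$ by Hiss' identity $R_{T^*}^G(st)^\circ=R_{T^*}^G(s)^\circ$---is absent from your proposal, and this is the key technical fact that makes the generating-set argument close.

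On the two exceptional cases, your parenthetical ``equivalently $C_{G^*}(s)$ is disconnected with component group of order $\ell$'' is false: in both $(E_6,\ell=3,A_5\times A_1)$ and $(E_7,\ell=2,A_5\times A_2)$ one has $A(s)=1$ by Bonnafé's table, so $C_{G^*}(s)$ is connected. The real obstruction is that for some $t$ the product $st$ is quasi-isolated with disconnected centraliser, so neither hypothesis of Theorem \ref{restriction} applies, and the generating set coming from the ``otherwise'' bound is not attainable by these methods. The paper handles this not by a regular embedding but by passing to the quotient $G_{sc}^F/Z(G_{sc}^F)\cong[G_{ad}^F,G_{ad}^F]$, applying Navarro's results on dominating and covering blocks, and---crucially, since one does not know which $G_{ad}^F$-block covers the given one---bounding uniformly over the set $\mathcal{S}(B)$ of candidate blocks with defect group of the correct order; the multipliers $3$ and $2$ in the statement come from the index $[G_{ad}^F:[G_{ad}^F,G_{ad}^F]]$. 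Your regular-embedding route is the dual picture and might be made to work, but the phrase ``the normal subgroup $G^F\tilde{G}^F$'' does not parse, and comparing $|\mathcal{E}(\tilde{G}^F,\tilde{s})|$ to $|\mathcal{E}(G^F,s)|$ requires a counting argument you have not supplied, since restrictions from $\tilde{G}^F$ can split; you flag this yourself as the unresolved part, and it is exactly where the paper's Theorem \ref{E_6 adjoint type}, the $\mathcal{S}(B)$ bookkeeping, and the Clifford-theoretic estimates do the work your proposal leaves open.
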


\noindent
Using Theorem \ref{Theorem A} and $e$-Harish-Chandra theory, as established in \cite{Enguehard} and \cite{Malle-Kessar}, we can determine explicit upper bounds for the number of irreducible Brauer characters in each individual block in $\mathcal{E}_\ell(G^F,s)$ where $s \in G^{*F}$ is a semisimple quasi-isolated $\ell'$-element. We then use these upper bounds to check the Malle--Robinson conjecture for the quasi-isolated blocks of the finite groups of exceptional Lie type. Together with the results in \cite{Ruwen} this yields the following.

%these bounds yield upper bounds for the individual blocks contained in $\mathcal{E}_\ell(G^F,s)$, which can be found in \cite{Thesis}. 

\begin{thm} \label{Theorem B}
Let $G$ be a simple, simply connected linear algebraic group of exceptional type defined over $\mathbb{F}_q$ with Frobenius endomorphism $F: G \to G$. Let $\ell \nmid q$ be a bad prime for $G$ and let $B$ be a quasi-isolated $\ell$-block of $G^F$ or $G^F/Z(G^F)$. Then the Malle--Robinson conjecture holds for $B$ unless, possibly, when $G^F=E_8(q)$ and $B$ is the block numbered 3 or 8 in the table in \cite[page 358]{Enguehard}.

%Let $H$ be a finite quasi-simple group of exceptional Lie type. Let $\ell$ be a prime and let $B$ be an $\ell$-block of $H$. Then the Malle--Robinson conjecture holds unless, possibly, when $H=E_8(q)$ and $B$ is numbered $3$ or $8$ in the table in \cite[page 358]{Enguehard}. 

%Suppose that $G$ is a simple, simply connected group of exceptional type defined over $\mathbb{F}_q$ with Frobenius endomorphism $F:G \to G$. Let $\ell$ be a prime not dividing $q$ and let $B$ be a quasi-isolated $\ell$-block of $G^F$. Then the Malle--Robinson conjecture holds, unless possibly if  $B$ is a non-principal unipotent $2$-block of $E_8(q)$ of positive defect.
\end{thm}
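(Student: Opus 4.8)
The plan is to upgrade Theorem~\ref{Theorem A}, which bounds $l_s$ for an entire Lusztig series $\mathcal{E}_\ell(G^F,s)$, into upper bounds for the individual blocks contained in that series, and then to compare these with the bound predicted by the Malle--Robinson conjecture, namely $\ell^{\mathrm{sr}(D)}$ for a block with defect group $D$, where $\mathrm{sr}(D)$ is the sectional $\ell$-rank of $D$. All the input data needed for this — the quasi-isolated semisimple $\ell'$-classes $s$ together with the structure of $C_{G^*}(s)$, the splitting of $\mathcal{E}_\ell(G^F,s)$ into blocks, and the isomorphism type of the corresponding defect groups — is available from the work of Kessar--Malle \cite{Malle-Kessar} and of Enguehard \cite{Enguehard}; the whole argument is then a finite check running over $G\in\{G_2,F_4,E_6,{}^2\!E_6,E_7,E_8\}$ and over the bad primes $\ell$ (so $\ell\in\{2,3\}$, together with $\ell=5$ for $E_8$).

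For each quasi-isolated $s$ and each block $B\subseteq\mathcal{E}_\ell(G^F,s)$ I would produce an explicit upper bound for $l(B)$. When $\mathcal{E}_\ell(G^F,s)$ is a single block this is immediate from Theorem~\ref{Theorem A}. When it splits, one uses $e$-Harish-Chandra theory as developed in \cite{Enguehard} and \cite{Malle-Kessar} to decide which $e$-Harish-Chandra series of characters lie in which block, and refines the counting argument underlying Theorem~\ref{Theorem A} accordingly; this expresses each $l(B)$ as a sum of cardinalities of $e$-Harish-Chandra series of (twisted) unipotent characters in the relevant centralizers, which are known from Lusztig's classification. The numbers $|\mathcal{E}(G^F,s)|$ and $|\mathcal{E}(G^F,st)|$ on the right-hand side of Theorem~\ref{Theorem A} are read off from the same tables, via the Jordan decomposition of characters applied to $C_{G^*}(s)^F$ and $C_{G^*}(st)^F$.

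It then remains to verify the inequality $l(B)\le\ell^{\mathrm{sr}(D)}$ in every case. For the majority of blocks the defect group is cyclic, or abelian of small rank, or otherwise has sectional rank large enough that the (usually very generous) upper bound for $l(B)$ lies comfortably below $\ell^{\mathrm{sr}(D)}$. The two exceptional configurations singled out in Theorem~\ref{Theorem A} — $G=E_6$, $\ell=3$, $C_{G^*}(s)=A_5\times A_1$, and $G=E_7$, $\ell=2$, $C_{G^*}(s)=A_5\times A_2$ — carry the extra factor $3$, respectively $2$; here one checks directly that the defect group still has sectional rank at least $\lceil\log_\ell l(B)\rceil$, so the conjecture survives. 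The passage to $G^F/Z(G^F)$ is dealt with last: its quasi-isolated $\ell$-blocks are exactly the quasi-isolated $\ell$-blocks of $G^F$ on which $Z(G^F)$ acts trivially, they have the same number of irreducible Brauer characters, and their defect group is the image of $D$; one re-runs the comparison with this possibly smaller, but still sufficient, sectional rank.

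The main obstacle is $G^F=E_8(q)$ at $\ell=2$. There the $2$-elements $t$ contributing to the sum in Theorem~\ref{Theorem A} are numerous, the defect groups are large $2$-groups whose sectional rank has to be computed with care, and $\mathcal{E}_\ell(G^F,1)$ splits into several unipotent blocks in a way that is delicate to track; for two of them — the blocks numbered $3$ and $8$ in the table in \cite[p.~358]{Enguehard} — the upper bound for $l(B)$ produced by Theorem~\ref{Theorem A} exceeds $2^{\mathrm{sr}(D)}$, so this method does not decide the conjecture. These are precisely the cases excluded in the statement; in all remaining cases the combination of Theorem~\ref{Theorem A} with the size of the defect group closes the gap, and together with the good-prime results of \cite{Ruwen} this establishes Theorem~\ref{Theorem B}.
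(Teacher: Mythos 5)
Your proposal follows essentially the same strategy as the paper: bound $l(B)$ for each quasi-isolated block using Theorem~\ref{Theorem A} refined by the $e$-Harish-Chandra block parametrizations of Kessar--Malle and Enguehard, obtain a lower bound on $s(D)$ (typically from $Z(L)_\ell^F\subseteq D$ for the $e$-cuspidal pair $(L,\lambda)$, occasionally via Bonnaf\'e--Dat--Rouquier Morita equivalences or an alternative cuspidal pair), compare case by case, pass to $G^F/Z(G^F)$ via dominated blocks, and isolate the two $E_8$ unipotent blocks where the method fails. One small imprecision: the obstruction for the blocks numbered $3$ and $8$ of $E_8(q)$ at $\ell=2$ is not that the relevant $2$-elements $t$ are numerous or the defect groups large, but simply that $c(B)=6$ while the available lower bound on the sectional rank is $s(Z(L)_2^F)=2$, so $6>2^2$ and the inequality cannot be closed.
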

\noindent
Using the reduction of Bonnaf\'e--Dat--Rouquier \cite[Theorem 7.7]{BDR}, we can then prove the following corollary to Theorem \ref{Theorem B}.

\begin{cor}\label{Theorem C}
Let $H$ be a finite quasi-simple group of exceptional Lie type. Let $\ell$ be a prime and let $B$ be an $\ell$-block of $H$. Then $(H,B)$ is not a minimal counterexample unless, possibly,  when $H=E_8(q)$ and $B$ is numbered $3$ or $8$ in the table in \cite[page 358]{Enguehard}.
\
\end{cor}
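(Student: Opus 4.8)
The plan is to reduce an arbitrary $\ell$-block of a finite quasi-simple group of exceptional Lie type, by means of the Bonnaf\'e--Dat--Rouquier equivalences, to a quasi-isolated block that is covered by Theorem \ref{Theorem B} (when $\ell$ is bad) or by \cite{Ruwen} (when $\ell$ is good), exploiting the fact that a splendid Rickard equivalence transports all the data entering the Malle--Robinson conjecture.

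First I would dispose of the peripheral cases. If $\ell$ is the defining characteristic, the conjecture is known for groups of Lie type, and the finitely many quasi-simple groups of exceptional Lie type possessing an exceptional Schur multiplier can be checked directly. I may therefore assume that $H = G^F$ or $H = G^F/Z(G^F)$ with $G$ simple, simply connected, of exceptional type and $\ell \nmid q$; since $|Z(G^F)|$ is $1$ or a prime in every exceptional type, $H$ is then one of the groups occurring in Theorem \ref{Theorem B}.

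Next I would apply \cite[Theorem 7.7]{BDR}: the block $B$ is splendid Rickard equivalent to a quasi-isolated $\ell$-block $B'$ of a group $H'$ which, up to a central subgroup, is a central product of a torus with quasi-simple groups of classical or exceptional Lie type, the exceptional factors having rank strictly smaller than that of $G$ --- in particular never being of type $E_8$ --- as soon as the underlying semisimple $\ell'$-element $s$ is nontrivial. A splendid Rickard equivalence preserves the number of simple modules, the isomorphism type of a defect group and its fusion system, hence every term in the Malle--Robinson inequality; and for a block of a central product this inequality is controlled by the inequalities for the blocks of the factors. The classical-type factors are covered by the known validity of the conjecture for groups of classical Lie type, the torus contributes nothing, and each exceptional-type factor is covered by Theorem \ref{Theorem B} if $\ell$ is bad and by \cite{Ruwen} if $\ell$ is good; since no such factor is of type $E_8$, there is no exception here. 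It follows that the conjecture holds for $B'$, and hence for $B$, in all cases except possibly when $s$ is trivial, $G = E_8$ and $\ell$ is bad; in that remaining case $B$ is itself a quasi-isolated $\ell$-block of $E_8(q)$, and Theorem \ref{Theorem B} establishes the conjecture unless $B$ is the block numbered $3$ or $8$ of \cite[page 358]{Enguehard}. In every other case $(H,B)$ satisfies the conjecture, and a fortiori is not a minimal counterexample.

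I expect the delicate point to be the application of \cite[Theorem 7.7]{BDR} at bad primes: one must verify that its hypotheses are satisfied when $G$ is simply connected of exceptional type (so that $G^*$ is adjoint and $C_{G^*}(s)$ may be disconnected), keep track of the fact that the resulting equivalence is splendid Rickard and not merely a Morita equivalence --- so that defect groups and fusion systems, and therefore the precise right-hand side of the Malle--Robinson bound, are transported --- and check that the quasi-isolated block $B'$ indeed lies in a group to which Theorem \ref{Theorem B} or the classical-type results apply. The compatibility of the conjecture with central products and with the passage to $G^F/Z(G^F)$ is routine but should be recorded.
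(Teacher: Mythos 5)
Your overall strategy -- use Bonnaf\'e--Dat--Rouquier to pass from an arbitrary block to a quasi-isolated one, then invoke Theorem~\ref{Theorem B} and \cite{Ruwen} -- agrees with the paper's. But the way you use the reduction has a genuine gap. You write that the classical-type factors appearing after BDR ``are covered by the known validity of the conjecture for groups of classical Lie type.'' The Malle--Robinson conjecture is \emph{not} known for arbitrary $\ell$-blocks of classical groups of Lie type; this is an open problem. Consequently you cannot show that the conjecture actually holds for the non-quasi-isolated block $B$, and your conclusion ``it follows that the conjecture holds for $B'$, and hence for $B$'' is unjustified.

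The paper avoids this entirely by exploiting minimality rather than trying to verify the conjecture outright. Quoting the paper's argument: after reducing to $H=G^F/Z$ with $G$ simply connected exceptional and $\ell\nmid q$ (via \cite[Propositions 6.1, 6.4, 6.5]{Malle-Robinson}), and passing to the block $B'$ of $G^F$ dominating $B$ (preserving $l$ and $s(D)$), one applies \cite[Theorem 7.7]{BDR} to obtain a Morita equivalence with an $\ell$-block $b$ of a \emph{subgroup} $N$ of $G^F$, preserving defect groups, so $l(B')=l(b)$ and $s(B')=s(b)$. If $s$ is not quasi-isolated, then $N\subsetneq G^F$, so $(N,b)$ would be a counterexample in a strictly smaller group with the same defect group, contradicting minimality of $(H,B)$. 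Hence $B$ must be quasi-isolated; then \cite[Theorem B]{Ruwen} forces $\ell$ bad, and Theorem~\ref{Theorem B} gives the claim. You should restructure your argument along these lines: the minimality hypothesis is precisely what lets you avoid having to prove the conjecture for the (generically classical) Levi-subgroup factors. A related, smaller issue: BDR produces a block of a subgroup $N\supseteq L$ of $G^F$ (with $N/L$ possibly nontrivial), not of a central product; the central-product description applies to $L$, and also a plain defect-group-preserving Morita equivalence (rather than a splendid Rickard one) already suffices to transport $l(B)$ and $s(D)$, since the sectional $\ell$-rank depends only on the isomorphism type of a defect group.
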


\section{Main tools}
\noindent
Let $G$ be a connected reductive group defined over $\mathbb{F}_q$ with Frobenius endomorphism $F$. In this section we will introduce the notation and the main tools used in the proofs of Theorems \ref{Theorem A} and \ref{Theorem B}. \\

\noindent
Let $R_{L \subseteq P}^G$ denote Lusztig induction from an $F$-stable Levi subgroup $L$ contained in a parabolic subgroup $P \subseteq G$ to $G$ and let $^*\!R_{L \subseteq P}^G$ denote the adjoint functor (see \cite[11.1 Definition]{Digne-Michel}). By \cite{Bonnafe-Michel}, we can (and will) omit the parabolic subgroup from the subscript for most $G$ and $q$. In the few cases where we still do not know if the parabolic subgroup can be omitted, the proofs of Theorems \ref{Theorem A} and \ref{Theorem B} are immediate, which is why we omit the parabolic anyway.

Let $G^*$ be a group in duality with $G$ with respect to an $F$-stable maximal torus $T$ of $G$ (see \cite[Definition 13.10]{Digne-Michel}). By results of Lusztig, $\operatorname{Irr}(G^F)$ is a disjoint union of so-called (rational) Lusztig series $\mathcal{E}(G^F,s)$, where $s$ runs over the $G^F$-conjugacy classes of semisimple elements of the dual group $G^*$ (see \cite[Definition 8.23]{Cabanes-Enguehard}). 
Recall the following classical result about the block theory of finite groups of Lie type.

\begin{thm}[{\cite[Théorème 2.2]{Broue}, \cite[Theorem 3.1]{Hiss}}]\label{Lusztigseries blocks}
Let $s \in G^{*F}$ be a semisimple $\ell'$-element. Then we have the following.\\
\begin{tabular}{rp{13,7cm}}
(a) & The set $
\mathcal{E}_\ell(G^F,s):= \bigcup_{t \in C_{G^*}(s)_\ell^F} \mathcal{E}(G^F,st)$
 is a union of $\ell$-blocks of $G^F$. \\
(b) & Any $\ell$-block contained in $\mathcal{E}_\ell(G^F,s)$ contains a character of $\mathcal{E}(G^F,s)$.
\end{tabular}
\end{thm}

\noindent
For good primes $\ell$ we have the following stronger result. Let $\chi$ be an ordinary irreducible character of $G^F$. We denote the restriction of $\chi$ to the $\ell$-regular elements of $G^F$ by $\chi^\circ$. We define $\hat{\mathcal{E}}(G^F,s):=\{\chi^\circ \mid \chi \in \mathcal{E}(G^F,s)\}$.

\begin{thm}[{\cite[Theorem A]{Geck2}}] \label{Geck}
Let $G$ be a connected reductive group defined over $\mathbb{F}_q$ with Frobenius endomorphism $F:G \to G$. Assume that $\ell$ is a good prime for $G$ not dividing the order of $(Z(G)/Z^\circ(G))_F$. Let $s \in G^{*F}$ be an $\ell'$-element. Then $\mathcal{E}(G^F,s)$ is an ordinary basic set for the union of blocks $\mathcal{E}_\ell(G^F,s)$.
\end{thm}

\noindent
Note that the assertion of Theorem \ref{Geck} no longer holds for bad primes (see section 1.2 of \cite{Geck-Hiss} for a counterexample). The crux of this work is therefore to find a replacement for Theorem \ref{Geck} when $\ell$ is a bad prime.  \\

\noindent
The following results are the key tools in the proof of Theorem \ref{Theorem A}.

\begin{lem} \label{t im Zentrum}
Let $G$ be a connected reductive group defined over $\mathbb{F}_q$ with Frobenius endomorphism $F:G \to G$. Let $s \in G^*$ be a semisimple $\ell'$-element and $t \in C_{G^*}(s)_\ell$.
% such that $st$ is not quasi-isolated. 
Let $L^*$ be the minimal Levi subgroup containing $C_{G^*}(st)$. 
%Suppose that $[L^*,L^*]$ is simply connected. 
Then $t \in Z(L^*)$ if one of the following conditions is satisfied: 
\begin{enumerate}[label=(\alph*)]
\item $\ell$ is good for $L^*$ and $C_{G^*}(t)$ is connected;
\item $\ell$ is good for $L^*$ and $C_{G^*}(st)$ is connected;
\item $\ell$ is good for $C_{G^*}(s)$, the order of $s$ is not divisible by any bad primes for $L^*$ and $C_{G^*}(st)$ is connected.
\end{enumerate}
\end{lem}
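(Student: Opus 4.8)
The plan is to exploit the minimality of $L^*$: since $L^*$ is the smallest Levi subgroup of $G^*$ containing $C_{G^*}(st)$, it follows that $Z^\circ(C_{G^*}(st)) \subseteq Z^\circ(L^*)$ and in fact $L^*$ is the centralizer in $G^*$ of the torus $Z^\circ(C_{G^*}^\circ(st))$ (cf.\ \cite[Proposition 2.1]{Bonnafe} or the standard description of the minimal Levi containing a connected-centralizer-type subgroup). The semisimple element $st$ lies in $C_{G^*}(st) \subseteq L^*$, so it suffices to show that $s$ and $t$ both commute with everything in $L^*$; equivalently, writing $M^* = C_{L^*}(s)$, it suffices to show $t \in Z(M^*)$ and then that $M^* = L^*$ (the latter will follow once we know $\ell$ is good for the relevant group and $s$ has the right properties). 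First I would reduce to showing $t \in Z^\circ(C_{G^*}^\circ(st))$: indeed $t$ is an $\ell$-element, and in each of cases (a), (b), (c) the hypotheses force the relevant centralizer to be connected, so $C_{G^*}(st) = C_{G^*}^\circ(st)$ and $t$ is a central $\ell$-element; the minimal Levi $L^*$ then has $Z^\circ(L^*) = Z^\circ(C_{G^*}^\circ(st))$, and I claim $t \in Z^\circ(C_{G^*}^\circ(st))$.

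The key point where the goodness hypothesis enters is the following: an $\ell$-element that is central in a connected reductive group $H$ with $\ell$ good for $H$ lies in $Z^\circ(H)$. This is because $Z(H)/Z^\circ(H)$ has order divisible only by bad primes for $H$ (for simply connected or more generally for $H$ with the derived group of the relevant isogeny type this is the standard fact about the fundamental group; in general one uses that the component group of the center is controlled by the torsion in the cocharacter lattice, whose primes are bad) — so if $\ell$ is good for $H$, the $\ell$-part of $Z(H)$ is contained in $Z^\circ(H)$. Applying this with $H = C_{G^*}^\circ(st)$ in cases (a) and (b), and with $H$ built from $C_{G^*}^\circ(s)$ in case (c), gives $t \in Z^\circ(H) \subseteq Z^\circ(L^*) \subseteq Z(L^*)$. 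For case (a) one additionally notes that $C_{G^*}(t)$ connected forces $C_{G^*}(st) = C_{C_{G^*}(t)}(s)$ to behave well and in particular $C_{G^*}(st)$ connected; for case (c) one argues that $C_{G^*}(st) = C_{C_{G^*}(s)}(t)$, and since the order of $s$ avoids the bad primes of $L^*$ one can identify which group $\ell$ must be good for and transfer the goodness from $C_{G^*}(s)$ to $C_{G^*}^\circ(st)$.

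I would then assemble these pieces: in each case verify that $C_{G^*}(st)$ is connected (given, or deduced from the stated connectivity of $C_{G^*}(t)$), identify the reductive group $H$ to which the goodness hypothesis applies, invoke the "good-prime-central-$\ell$-element-is-in-the-identity-component" fact to put $t \in Z^\circ(H)$, and finally use the minimal-Levi characterization $Z^\circ(H) \subseteq Z^\circ(L^*)$ to conclude $t \in Z(L^*)$. The main obstacle I anticipate is the bookkeeping around \emph{which} group is required to have $\ell$ good — the three cases deliberately trade off goodness for $L^*$ against goodness for $C_{G^*}(s)$ against a condition on $\operatorname{ord}(s)$ — and making precise the comparison of bad primes between $C_{G^*}(s)$, $C_{G^*}^\circ(st)$, and $L^*$; this is essentially a root-system combinatorics argument (a Levi or pseudo-Levi subsystem of a system with only good primes $\ell$ again has $\ell$ good, with the one caveat that passing to $C_{G^*}(st)$ can introduce a type-$A_{\ell-1}$ or similar factor, which is exactly why the extra hypothesis on $\operatorname{ord}(s)$ is needed in (c)). Everything else is a routine chase through centralizer identities and the definition of the minimal Levi.
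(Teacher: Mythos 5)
Your central claim---that for a connected reductive group $H$ with $\ell$ good, every $\ell$-element of $Z(H)$ lies in $Z^\circ(H)$ because ``$Z(H)/Z^\circ(H)$ has order divisible only by bad primes for $H$''---is false, and it is load-bearing for your whole argument. Take $H = SL_n$: every prime is good for type $A$, yet $Z(H) = \mu_n$ while $Z^\circ(H) = 1$, so $Z(H)/Z^\circ(H) \cong \mathbb{Z}/n\mathbb{Z}$ contains $\ell$-torsion for any $\ell \mid n$. You are conflating bad primes (defined by coefficients in the highest root) with the primes dividing the order of the fundamental group of the derived group; for type $A$ these collections are disjoint. Consequently, the step ``$t$ is a central $\ell$-element of $C_{G^*}^\circ(st)$ and $\ell$ is good, hence $t \in Z^\circ(C_{G^*}^\circ(st))$'' does not follow, and the rest of the chain $t \in Z^\circ(C_{G^*}^\circ(st)) \subseteq Z^\circ(L^*) \subseteq Z(L^*)$ never gets started.

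The paper's proof uses the goodness of $\ell$ in a quite different and essential way: via \cite[Proposition 2.1]{Geck-Hiss}, which says that centralizers of $\ell$-elements are Levi subgroups when $\ell$ is good. In case (a), one observes that $C_{G^*}(st) \subseteq L^* \cap C_{G^*}(t) = C_{L^*}(t)$, that this intersection is a Levi subgroup of $G^*$ (by Geck--Hiss plus connectedness of $C_{G^*}(t)$), and then the minimality of $L^*$ among Levis containing $C_{G^*}(st)$ forces $L^* = C_{L^*}(t)$, i.e.\ $t \in Z(L^*)$. Case (b) is the same but with $C_{L^*}^\circ(t)$ and without needing $C_{G^*}(t)$ connected. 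Case (c) is a more elaborate two-step minimality argument applied to $C_{L^*}^\circ(s)$, using the hypothesis on $\operatorname{ord}(s)$ so that this, too, is a Levi. So the route is: centralizer-is-Levi plus minimality of $L^*$, not any structure result about $Z(H)/Z^\circ(H)$. I also note that in case (a) your preliminary reduction ``$C_{G^*}(t)$ connected forces $C_{G^*}(st)$ connected'' is not justified (and the paper neither claims nor needs it), and your inclusion $Z^\circ(C_{G^*}^\circ(st)) \subseteq Z^\circ(L^*)$ is not automatic when $C_{G^*}(st)$ is disconnected, which is precisely the situation allowed in case (a).
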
 

\begin{proof}
Suppose (a) is satisfied. We have
\begin{align*}
C_{G^*}(st) \subseteq L^* \cap C_{G^*}(t) \subseteq C_{G^*}(t).
\end{align*} 
By \cite[Proposition 2.1]{Geck-Hiss} and the fact that $C_{G^*}(t)$ is connected, $C_{L^*}(t)=L^* \cap C_{G^*}(t)$ is a Levi subgroup of $G^*$. The minimality of $L^*$ yields $L^*=L^* \cap C_{G^*}(t)$. In other words, $L^* \subseteq C_{G^*}(t)$ which implies that $t \in Z(L^*)$. 

Assume condition (b) to be satisfied. 
%Since $[L^*,L^*]$ is assumed to be simply connected, $C_{G^*}(st)=C_{L^*}(st)$ is connected. 
We have
\begin{align*}
C_{G^*}(st) \subseteq L^* \cap C_{G^*}^\circ(t) \subseteq C_{G^*}^\circ(t).
\end{align*}
By \cite[Proposition 2.1]{Geck-Hiss}, $ L^* \cap C_{G^*}^\circ(t)=C_{L^*}^\circ(t)$ is a Levi subgroup of $L^*$. As a Levi subgroup of a Levi subgroup of $G^*$, $C_{L^*}^\circ(t)$ is a Levi subgroup of $G^*$ itself. Since $C_{G^*}(st) \subseteq C_{L^*}^\circ(t)$, the minimality of $L^*$ implies $L^*=C_{L^*}^\circ(t)$; in other words $t \in Z(L^*)$.

Assume condition (c) to be satisfied. We claim that $L^*=C_{G^*}(st)$. Since $s$ and $t$ are commuting elements of coprime order, we have
\begin{align*}
C_{G^*}(st)=C_{C_{G^*}(s)}(t)=C_{C_{G^*}(t)}(s).
\end{align*}
In particular, $C_{C_{G^*}(s)}(t)$ and $C_{C_{G^*}(t)}(s)$ are connected. By our assumption on the order of $s$, $C_{G^*}(st)=C_{C_{G^*}^\circ(s)}(t)$ is a Levi subgroup of $C_{G^*}^\circ(s)$ (see \cite[Proposition 2.1]{Geck-Hiss}).
Additionally, 
\begin{align*}
C_{G^*}(st) \subseteq L^* \cap C_{G^*}^\circ(s) \subseteq C_{G^*}^\circ(s).
\end{align*} 
By \cite[Proposition 1.10]{Ruwen} and our assumption on the order of $s$, $L^* \cap C_{G^*}^\circ(s)$ is a Levi subgroup of $C_{G^*}^\circ(s)$. The minimality of $L^*$ therefore yields $C_{G^*}(st)=L^* \cap C_{G^*}^\circ(s)=C_{L^*}^\circ(s)$. 
%Since $[L^*,L^*]$ is simply connected, we have $C_{L^*}^\circ(s)=C_{L^*}(s)$.
Applying \cite[Proposition 2.1]{Geck-Hiss} again, we see that $C_{L^*}^ \circ(s)$ is a Levi subgroup of $L^*$. Hence, $C_{L^*}^\circ(s)$ is a Levi subgroup of $G^*$ as well. Now, the minimality of $L^*$ implies $L^*=C_{L^*}^\circ(s)=C_{G^*}(st)$ which proves the claim. In particular, $t \in Z(L^*)$.
\end{proof} 

When applying Lemma \ref{t im Zentrum}, we will always work with $F$-stable elements. A natural question is therefore if the minimal Levi subgroup $L^*$ in Lemma \ref{t im Zentrum} is $F$-stable as well. When $C_{G^*}(st)$ is connected this is immediate, since we then have $L^*=C_{G^*}(Z(C_{G^*}(st))^\circ)$. When $C_{G^*}(st)$ is disconnected, we use the following result. 

First, we need some notation. If $s \in G^{*F}$, we denote the $G^*$-conjugacy class of $s$ by $(s)$ and its $G^{*F}$-conjugacy class by $[s]$. If $Z(G)$ is disconnected, $(s)$ can split into multiple  $G^{*F}$-conjugacy classes. This phenomenon is well understood (see e.g. \cite[Theorem 2.1.5 (b)]{GLS} where we set $\Omega=(s)$)

\begin{lem}\label{F-stable Levis} Let $G$ be a connected reductive group defined over $\mathbb{F}_q$ with Frobenius endomorphism $F: G \to G$. Let $s \in G^{*F}$ be a semisimple element and let $L^* \subseteq G^*$ be the minimal Levi subgroup containing $C_{G^*}(s)$. Then $L^*$ is $F$-stable.
\end{lem}
\begin{proof}
Let 
\begin{align*}
V:=\{(t,M^*) \text{ } | \text{ } t \in (s) \text{ and } M^* \text{ is the minimal Levi subgroup containing } C_{G^*}(t) \}.
\end{align*}
Now $G^*$ acts transitively on $V$ by conjugation and this action is compatible with the natural $F$-action on $V$. Hence  there exists an $F$-stable pair $v=(z,L^*) \in V^F$ by \cite[Theorem 21.11 (a)]{Malle-Testerman}. The stabilizer of $v$ is $G^{*}_v=C_{G^*}(z) \cap N_{G^*}(L^*)=C_{G^*}(z)$. By \cite[Theorem 21.11]{Malle-Testerman} we therefore have a natural 1-1 correspondence
\begin{align*}
\{G^{*F}\text{-orbits on } V^F \} \longleftrightarrow \{G^{*F} \text{-classes in } (s)\}.
\end{align*}
In particular, there is a $G^{*F}$-orbit of $F$-stable Levi subgroups corresponding to $[s]$. It follows that the minimal Levi containing $C_{G^*}(s)$ is $F$-stable.
\end{proof}

We denote the characteristic function of the set of $\ell$-regular elements of $G^F$ by $\gamma_{\ell'}$.

\begin{thm}\label{restriction}
Let $G$ be a connected reductive group defined over $\mathbb{F}_q$ with Frobenius endomorphism $F:G \to G$. Let $\ell$ be a bad prime for $G$. Let $s \in G^{*F}$ be a semisimple $\ell'$-element and $t \in C_{G^*}(s)_\ell^F$. Let $L^*$ be the minimal ($F$-stable) Levi subgroup containing $C_{G^*}(st)$. If $\chi \in \mathcal{E}(G^F,st)$ then $\chi^\circ \in \mathbb{Q}\hat{\mathcal{E}}(G^F,s)$ if one of the following conditions is satisfied:
\begin{enumerate}[label=(\alph*)]
\item $t \in Z(L^*)$;
\item $C_{G^*}^\circ(st)$ is of type $A$ and $C_{G^*}(st)^F=C_{G^*}^\circ(st)^F$. 
\end{enumerate}
\end{thm}
\begin{proof}
If $t=1$, then $\chi^\circ \in \mathcal{E}(G^F,s)$ and we are done. Hence, assume $t \neq 1$. Let $L$ be an $F$-stable Levi subgroup of $G$ dual to $L^*$ (see \cite[Definition 13.10]{Digne-Michel}). By \cite[Theorem 9.16]{Cabanes-Enguehard} there is a character $\pi \in \mathcal{E}(L^F,st)$ such that $\chi=\epsilon_G \epsilon_L R^G_L(\pi)$. 

Suppose that (a) is satisfied. Using a slight variation of the proof of \cite[Theorem 3.1]{Geck-Hiss}, we show that $\chi^\circ \in \mathbb{Z}\hat{\mathcal{E}}(G^F,s)$. By \cite[Theorem 9.16]{Cabanes-Enguehard} there is a character $\pi \in \mathcal{E}(L^F,st)$ such that $\chi=\epsilon_G \epsilon_L R^G_L(\pi)$. Since $t \in Z(L^*)^F$, there exists a character $\theta_t$ of $L^F$, dual to $t$, such that $\pi=\theta_t\lambda$, where $\lambda \in \mathcal{E}(L^F,s)$ (see \cite[Proposition 13.30]{Digne-Michel}). The order of $\theta_t$ is equal to the order of $t$ and is therefore a power of $\ell$. Thus, $\theta_t^\circ=1_{L^F}^\circ$. We have
\begin{align*}
\chi \gamma_{\ell'} 
&= \epsilon_G \epsilon_L R^G_L(\theta_t \lambda) \gamma_{\ell'} \\
&=\epsilon_G \epsilon_L R^G_L(\theta_t \lambda \gamma_{\ell'})  \text{ } \text{ } \text{(\cite[Proposition 3.8]{Digne-Michel2})} \\
&=\epsilon_G \epsilon_L R^G_L(\theta_t^\circ \lambda^\circ) \\ 
&=\epsilon_G \epsilon_L R^G_L(\lambda^\circ) \\
&=\epsilon_G \epsilon_L R^G_L(\lambda) \gamma_{\ell'}.
\end{align*}
By \cite[Corollary 6]{Lusztig} every irreducible constituent of $R^G_L(\lambda)$ lies in $\mathcal{E}(G^F,s)$. Since $\chi^\circ=(\chi \gamma_{\ell'})_{G^F_{\ell'}}$, it follows that $\chi^\circ \in \mathbb{Z} \hat{\mathcal{E}}(G^F,s)$.

Now, suppose that condition (b) is satisfied. As $C_{G^*}^\circ(st)$ is of type $A$ and $C_{G^*}(st)^F=C_{G^*}^\circ(st)^F$, every irreducible character in $\mathcal{E}(G^F,st)$ is uniform (see \cite[Definition 12.11]{Digne-Michel}). We can therefore write 
\begin{align*}
\chi= \sum_{T^* \subseteq C_{G^*}^\circ(st)} \alpha_{T^*} R_{T^*}^G(st),
\end{align*}
where $T^*$ runs over the $F$-stable maximal tori of $C_{G^*}^\circ(st)$ with suitable coefficients $\alpha_{T^*} \in \mathbb{Q}$. If we restrict $\chi$ to the $\ell$-regular elements of $G^F$, 
we see that
\begin{align*}
\chi^\circ=\sum_{T^* \subseteq C_{G^*}^\circ(st)} \alpha_{T^*} R_{T^*}^G(s)^\circ
\end{align*}
because $R_{T^*}^G(st)^\circ=R_{T^*}^G(s)^\circ$ by \cite[Proposition 2.2]{Hiss}. Since $\alpha_{T^*} \in \mathbb{Q}$ for every $F$-stable maximal torus $T^* \subseteq C_{G^*}^ \circ(st)$ (see \cite[Proposition 12.12]{Digne-Michel}), it follows that $\chi^\circ$ is a $\mathbb{Q}$-linear combination of the characters in $\hat{\mathcal{E}}(G^F,s)$. This proves the assertion.
\end{proof}
Now, we will introduce the notation and tools needed to prove Theorem \ref{Theorem B} and its corollary. A semisimple element $s$ of $G$ is called \textbf{quasi-isolated} if its centraliser $C_G(s)$ is not contained in any proper Levi subgroup of $G$. If, moreover, even $C_G^\circ(s)$ is not contained in any proper Levi subgroup, $s$ is called \textbf{isolated}. These elements have been classified by Bonnafé in \cite{Bonnafe}. We recall the result for simple groups of exceptional type.

\begin{prop}[Bonnafé] \label{Bonnafe}
Let $G$ be a simple, exceptional algebraic group of adjoint type. Then the conjugacy classes of semisimple, quasi-isolated elements of $G$, their orders, the root system of their centraliser $C_G(s)$, and the group of components $A(s):=C_G(s)/C_G^\circ(s)$ are as given in Table 1. 
\end{prop}
\noindent
The order of $s$ is denoted by $o(s)$.
\small
\begin{center}  \captionof{table}{Quasi-isolated elements in exceptional groups}
\begin{longtable}{|c|c|l|c|c|}
 \hline 
$G$	&	$o(s)$		&	$C_G^\circ(s)$	&	$A(s)$	&	isolated? \\ \hline  \hline
$G_2$ &   2	& 	$A_1 \times A_1$ & 1 & yes \\
	&	3	&	$A_2$ 	&	1	&	yes \\
	
$F_4$	&	2	&	$C_3 \times A_1, B_4$	&	1	&	yes\\
	&	3	&	$A_2 \times A_2$	&	1 	&	yes\\
	&	4	&	$A_3 \times A_1$	&	1 	&	yes\\
$E_6$	&	2	&	$A_5 \times A_1$	&	1 &	 yes\\
	&	3	&	$A_2 \times A_2 \times A_2$	&	3	&	yes\\
	&	3	&	$D_4$	&	3	&	no \\
	&	6	&	$A_1 \times A_1 \times A_1 \times A_1$	&	3 	&	no \\
$E_7$ &	2	&	$D_6 \times A_1$	&	1 &	yes\\
	&	2	&	$A_7$	&	2	&	yes \\
	&	2	&	$E_6$	&	2	&	no \\
	&	3	&	$A_5 \times A_2$	&	1 	&	yes\\
	&	4	&	$A_3 \times A_3 \times A_1$	&	2	&	yes \\
	&	4	&	$D_4 \times A_1 \times A_1$	&	2	&	no	\\
	&	6	&	$A_2 \times A_2 \times A_2$	&	2 	&	no \\
$E_8$	&	2	&	$D_8, E_7 \times A_1$	&	1	&	yes \\
	&	3	&	$A_8, E_6 \times A_2$	&	1 	&	yes	\\
	&	4	&	$D_5 \times A_3, A_7 \times A_1$	&	1	&	yes \\
	&	5	&	$A_4 \times A_4$	&	1	&	yes \\
	&	6	&	$A_5 \times A_2 \times A_1$	&	1	&	yes	 \\ \hline
\end{longtable}
\label{tabu:quasi-isolated elements}
\end{center}
\normalsize
\begin{definition}
(a) The $\ell$-blocks contained in $\mathcal{E}_\ell(G^F,s)$  for a semisimple, quasi-isolated $\ell'$-element $s \in G^{*F}$ are  called \textbf{quasi-isolated}. If $s=1$ they are also called \textbf{unipotent}.
(b) Let $H=G^F/Z$, for some subgroup $Z \subseteq Z(G^F)$. A block of $H$ is said to be \textbf{quasi-isolated} if it is dominated by a quasi-isolated block of $G^F$ and \textbf{unipotent} if it is dominated by a unipotent block of $G^F$.
\end{definition}
\noindent

Let $e \geq 1$ be an integer. We say an irreducible character of $G^F$ is \textbf{$e$-cuspidal} if $^*\!R_L^G(\chi)=0$ for every proper $e$-split Levi subgroup $L \subsetneq G$. Let $\lambda \in \operatorname{Irr}(L^F)$ for an $e$-split Levi subgroup $L \subseteq G$. Then we call $(L, \lambda)$ an \textbf{$e$-split pair}. We define a binary relation on $e$-split pairs by setting $(M, \zeta) \leq_e (L, \lambda)$ if $M \subseteq L$ and $\langle ^*\!R_M^L(\lambda), \zeta \rangle \neq 0$. Since the Lusztig restriction of a character is in general not a character, but a generalized character, the relation $\leq_e$ might not be transitive. We denote the transitive closure of $\leq_e$ by $\ll_e$. If $(L, \lambda)$ is minimal for the partial order $\ll_e$, we call $(L, \lambda)$ an \textbf{$e$-cuspidal pair} of $G^F$. Moreover, we say $(L, \lambda)$ is a \textbf{proper} $e$-cuspidal pair if $L \subsetneq G$ is a proper $F$-stable Levi subgroup of $G$. 

The $e$-cuspidal pairs of $G^F$ are the key ingredient in the parametrization of the blocks of $G^F$. We write $\mathcal{E}(G^F,(L,\lambda)):=\{\chi \in \text{Irr}(G^F) \mid (L,\lambda) \leq_e (G, \chi)\}$. \\

\noindent
From now on let $G$ be a simple, simply connected algebraic group of exceptional type defined over $\mathbb{F}_q$ with Frobenius endomorphism $F: G \to G$. Furthermore, suppose that $\ell \nmid q$ is a bad prime for $G$. Let $\mathcal{E}(G^F, \ell') := \bigcup_{\ell' \textit{-elements } s \in G^{*F}} \mathcal{E}(G^F,s)$ denote the union of Lusztig series corresponding to semisimple $\ell'$-elements of $G^{*F}$. 

Let $\chi \in \mathcal{E}(G^F, \ell')$. We say that $\chi$ is of \textbf{central $\ell$-defect} if $|G^F|_\ell=\chi(1)_\ell |Z(G)^F|_\ell$ and we say that $\chi$ is of \textbf{quasi-central $\ell$-defect} if some constituent of $\chi_{[G,G]^F}$ is of central $\ell$-defect. \\

\noindent
Using $e$-cuspidal characters of central $\ell$-defect, Enguehard was able to parametrise the unipotent blocks of $G^F$ for bad $\ell$ (see \cite{Enguehard}). Later on, Kessar and Malle, used the characters of quasi-central $\ell$-defect to parametrise the quasi-isolated blocks of $G^F$ for bad $\ell$ (see \cite{Malle-Kessar}). For this, they had to prove that the relation $\leq_e$, restricted to the set of $e$-cuspidal pairs $(L, \lambda)$ corresponding to a quasi-isolated element $1 \neq s \in G^{*F}$, is transitive (\cite[Theorem 1.4 (a)]{Malle-Kessar}). In particular, Malle and Kessar showed that $R_L^G$ satisfies an $e$-Harish-Chandra theory above each $e$-cuspidal pair $(L, \lambda)$ corresponding to a quasi-isolated element $1 \neq s \in G^{*F}$ (\cite[Theorem 1.4 (b)]{Malle-Kessar}). \\

\noindent
The reason we focus our attention on the quasi-isolated blocks of $G^F$ are the results of Bonnafé--Rouquier \cite{Bonnafe-Rouquier} and more recently Bonnafé--Dat--Rouquier \cite{BDR}. We use their reduction to quasi-isolated blocks to later prove the corollary to Theorem \ref{Theorem B}.  
\begin{notation} \label{notation} Let $B$ be a quasi-isolated $\ell$-block of $G^F$ where $\ell$ is bad. In this case $B$ corresponds to (exactly) one of the numbered lines in the tables of \cite{Enguehard} or \cite{Malle-Kessar}. If $i$ is that number, we will say that $B$ is a block \textbf{numbered} $i$ or that $B$ is \textbf{of type} $i$. Moreover, if $B$ is unipotent we add $u$ as a subscript and say that $B$ is a block numbered $i_u$ or of type $i_u$. 
\end{notation}

Next, we will briefly recall the Malle--Robinson conjecture. Let $H$ be a finite group. If $N \unlhd K \subseteq H$ are two subgroups of $H$, we call the quotient $K/N$ a \textbf{section} of $H$. The \textbf{sectional $\ell$-rank} $s(H)$ of a finite group $H$ is then  defined to be the maximum of the ranks of elementary abelian $\ell$-sections of $H$. Note that $s(K/N) \leq s(H)$ for every section  $K/N$ of $H$. \\

\noindent
For a block $B$ of $H$ we denote the number of irreducible Brauer characters in $B$ by $l(B)$.

\begin{conjecture}[Malle--Robinson, {\cite[Conjecture 1]{Malle-Robinson}}]\label{Malle-Robinson Conjecture}
Let $B$ be an $\ell$-block of a finite group $H$ with defect group $D$. Then 
\begin{align*}
l(B) \leq \ell^{s(D)}.
\end{align*}
\end{conjecture}
\noindent
If strict inequality holds, we say that the conjecture holds in \textit{strong form}. Since the defect groups of a given block $B$ are conjugate and therefore isomorphic to each other, we often write $s(B)$ instead of $s(D)$. 
\section{The blocks of $G_2(q)$ and $F_4(q)$}
\noindent
Let $G$ be a simple, simply connected algebraic group of type $G_2$ or $F_4$ defined over $\mathbb{F}_q$ with Frobenius endomorphism $F: G \to G$.
%, or of type $D_4$ with a Frobenius endomorphism $F$ such that $G^F=\text{} ^3\!D_4(q)$. 
The groups of type $G_2$ and $F_4$ are self-dual and therefore also of adjoint type.
%Moreover, when $G$ is of type $D_4$ with a Frobenius as above, $G^F$ is simple. So we can choose $G$ to be of adjoint type. 
As a result, centralisers of semisimple elements in $G$ are connected.

The ranks of these groups are small enough to extend the assertion of Theorem \ref{Theorem A} to all $\ell$-blocks.

%\begin{prop} \label{t im Zentrum F4} Let $G$ be simple, simply connected of type $F_4$ defined over $\mathbb{F}_q$ with Frobenius endomorphism $F: G \to G$. Let $\ell \nmid q$ be a bad prime for $G$. Let $1 \neq s \in  G^{*F}$ be a quasi-isolated semisimple $\ell'$-element and let $1 \neq t \in C_{G^*}(s)_\ell^F$. If $L^* \leq G^*$ is the minimal Levi subgroup containing $C_{G^*}(st)$, then $t \in Z(L^*)$.
%\end{prop}
%
%\begin{proof}
%The proper Levi subgroups of $G$ are of classical type. Hence, the only possible bad prime for $L^*$ is 2. If $\ell=3$, the assertion therefore follows from Lemma \ref{t im Zentrum} (a).
%
%If $\ell=2$, then $o(s)=3$ and $C_{G^*}(s)$ is of type $A_2 \times A_2$ (see Table \ref{tabu:quasi-isolated elements}). Hence, the assertion follows from Lemma \ref{t im Zentrum} (b).
%\end{proof}

\begin{thm} \label{generating set F4}
Let $\ell \nmid q$ be a bad prime for $G$ and let $s \in G^{*F}$ be a semisimple $\ell'$-element. Then $\hat{\mathcal{E}}(G^F,s) \cup (\bigcup_t \hat{\mathcal{E}}(G^F,st)$ generates  $\mathbb{Q}\operatorname{IBr}(\mathcal{E}_\ell(G^F,s))$,  where $t$ runs over the $\ell$-elements of $C_{G^*}(s)^F$ such that 
\begin{enumerate}
\item $st$ is quasi-isolated; and
\item  $C_{G^*}(st)$ is not of type $A$.
\end{enumerate} 
In particular, $l_s \leq |\mathcal{E}(G^F,s)|+ \sum_{1 \neq t}|\mathcal{E}(G^F,st)|$.
\end{thm}
\begin{proof}
If $t \in C_{G^*}(s)_\ell^F$ satisfies conditions (1) and (2) we can not apply Theorem \ref{restriction}, which is why the corresponding Lusztig series are part of the asserted generating set. 

Since $\{\chi^\circ \mid \chi \in \operatorname{Irr}(\mathcal{E}_\ell(G^F,s)) \}$ generates $\mathbb{Q}\operatorname{IBr}(\mathcal{E}_\ell(G^F,s))$ by \cite[(3.16) Lemma]{Navarro}, it suffices to show that $\chi^\circ \in \mathbb{Q}\hat{\mathcal{E}}(G^F,s)$ for every $\chi \in \mathcal{E}(G^F,st))$, where $1 \neq t \in C_{G^*}(s)_\ell^F$ does not satisfy conditions (1) or (2). 
Suppose $1 \neq t \in C_{G^*}(s)_\ell^F$ such that $st$ is not quasi-isolated, i.e. condition (1) is not satisfied. Let $\psi \in \mathcal{E}(G^F,st)$. Let $L^* \subsetneq G^*$ be the minimal Levi subgroup containing $C_{G^*}(st)$. In particular, $L^*$ is of classical type and $st$ is a quasi-isolated element of $L^*$. If $L^*$ is a torus, we are done. If $L^*$ is of type $A$ then $t \in Z(L^*)$ by Lemma \ref{t im Zentrum} (a). Thus, $\psi^\circ \in \mathbb{Q}\hat{\mathcal{E}}(G^F,s)$ by Theorem \ref{restriction}(a). Now, suppose that $L^*$ is of type $B$ or $C$ (hence $G$ is of type $F_4$). By \cite{Chevie} or \cite{Bonnafe}, we know that either $st \in Z(L^*)$ or that $C_{G^*}(st)$ is of type $A$. In the first case, it is immediate that $t \in Z(L^*)$ and, therefore, $\psi^\circ \in \mathbb{Q}\hat{\mathcal{E}}(G^F,s)$ by Theorem \ref{restriction} (a) while in the second case, $\psi^\circ \in \mathbb{Q}\hat{\mathcal{E}}(G^F,s)$ follows from Theorem \ref{restriction} (b). Suppose that $1 \neq t \in C_{G^*}(s)_\ell^F$ does not satisfy condition (2). Then $\psi^\circ \in \mathbb{Q}\hat{\mathcal{E}}(G^F,s)$ by Theorem \ref{restriction} (b).
\end{proof}
\noindent

\begin{remark}

Let $s \in G^{*F}$ be a semisimple quasi-isolated $\ell'$-element. Note that for $1 \neq t \in C_{G^*}(s)_\ell^F$, by Table \ref{tabu:quasi-isolated elements}, $st$ is only quasi-isolated when $s=1$. More precisely, $st$ satisfies both conditions (1) and (2) only when $G$ is of type $F_4$, $s=1$, $\ell=2$ and $t \in C_{G^*}(s)_2^F$ such that $C_{G^*}(t)^F=B_4(q)$ or $C_3(q)A_1(q)$.
\end{remark}
\noindent
It follows from Theorem \ref{generating set F4} that, if $B$ is an $\ell$-block contained in $\mathcal{E}_\ell(G^F,s)$ then a generating set for $\mathbb{Q} \operatorname{IBr}(B)$ is given by $\{ \chi^\circ \mid \chi \in \operatorname{Irr}(B) \cap (\mathcal{E}(G^F,s) \cup (\bigcup_{1 \neq t} \mathcal{E}(G^F,st)))\}$. Let $c(B):=|\operatorname{Irr}(B) \cap (\mathcal{E}(G^F,s) \cup (\bigcup_{1 \neq t} \mathcal{E}(G^F,st)))|$. To prove the Malle--Robinson conjecture for the quasi-isolated blocks $B$ of $G^F$, we show that
\begin{align*}
l(B) \leq c(B) \leq \ell^{s(B)}.
\end{align*} 
%For this, we need the classification of quasi-isolated blocks of $G^F$ in \cite{Malle-Kessar}. Throughout this work our numbering of the quasi-isolated blocks coincides with the one in \cite{Malle-Kessar}. 
We define
\begin{align*}
e=e_\ell(q):=\text{order of } q \text{ modulo }\begin{cases}
\ell \text{ } \text{if} \text{ } \ell>2, \\
4 \text{ } \text{if} \text{ } \ell=2.
\end{cases}
\end{align*} 
Since $\ell$ is assumed to be a bad prime, the only cases that occur are $e=1$ and $e=2$.

\begin{thm} \label{Conjecture F4}
Let $G$ be a simple, simply connected algebraic group of type $G_2$ or $F_4$ defined over $\mathbb{F}_q$ with Frobenius endomorphism $F: G \to G$.
%, or of type $D_4$ with a Frobenius endomorphism $F$ such that $G^F=\text{} ^3\!D_4(q)$.
%Let $e=e_\ell(q)=1$. Then Table \ref{table:Quasi-isolated blocks F4 conjecture} gives $c(B)$ and a lower bound for $s(B)$ for every quasi-isolated $\ell$-block $B$ of $G^F$.
Let $\ell \nmid q$ be a bad prime for $G$. Then the Malle--Robinson conjecture holds for  the quasi-isolated $\ell$-blocks of $G^F$.
\end{thm}

%\begin{center} \captionof{table}{$c(B)$ and lower bounds on $s(B)$ for the quasi-isolated blocks of $F_4(q)$}
%$
%\begin{array}{|c|l|c|c|c|} \hline
%B	&	C_{G^*}(s)^F	&	\ell	&	c(B)	 & s(B) \geq \\ \hline \hline
%1_u	&	G^{*F}	& 2 & 34+25+24 & 8 \\
%3_u   &	 & 2 & 1 & 1 \\ \hline \hline
%1	&	A_2(q) \tilde{A}_2(q)	&	2	&		9 & 4\\ \hline 
%2	&	^2\!A_2(q) \text{ } ^2\!A_2(q)	&	2	&	9 & 4\\ \hline \hline
%7_u & G^{*F} & 3 & 28 & 4 \\
%8_u & & 3 & 5 & 2 \\
%9_u & & 3 & 1 & 1 \\ \hline \hline
%3   &	B_4(q)	&	3	&	20 & 4\\
%4	&		&	3	&	5 & 2\\ \hline
%5	&	C_3(q)A_1(q)	&	3	&	20 & 4\\
%6	&		&	3	&	4 & 2\\ \hline
%7	&	A_3(q) \tilde{A}_1(q)	&	3	&	10 & 4\\ \hline
%8	&	^2\!A_3(q) \tilde{A}_1(q)	&	3	&	10 & 3\\ \hline
%\end{array} $
%\label{table:Quasi-isolated blocks F4 conjecture}
%\end{center}
%\noindent
%The table for $e=2$ is the Ennola dual of this one (see the remark in Section 3 of \cite{Malle-Kessar}).

\begin{proof}
We will prove the assertion for the harder case, namely when $G$ is of type $F_4$. The proof for type $G_2$ follows the same approach.
Let $B$ be a non-unipotent quasi-isolated block of $G^F$ associated to a line in \cite[Table 2]{Malle-Kessar} and let $(L_1, \lambda_1),
\dots,$ $(L_r, \lambda_r)$ be the $e$-cuspidal pairs associated to that block. By Theorem \ref{generating set F4} and \cite[Theorem 1.4]{Malle-Kessar} we conclude that 
\begin{align*}
c(B)=\sum_{i=1}^r |\mathcal{E}(G^F,(L_i,\lambda_i))|.
\end{align*}
Since $R_L^G$ satisfies an $e$-Harish-Chandra theory above each $(L_i, \lambda_i)$ by \cite[Theorem 1.4]{Malle-Kessar}, 
\begin{align*}
|\mathcal{E}(G^F,(L_i, \lambda_i))|=|\text{Irr}(W_{G^F}(L_i, \lambda_i))|,
\end{align*}
and these relative Weyl groups can be found in \cite[Table 2]{Malle-Kessar}. Let $(L, \lambda)$ now be the unique pair parametrising $B=b_{G^F}(L, \lambda)$ by \cite[Theorem 1.2]{Malle-Kessar}. Let $D$ be a defect group of $B$. By \cite[Theorem 1.2 (b)]{Malle-Kessar}, $Z(L)_\ell^F \subseteq D$ and hence $s(Z(L)_\ell^F) \leq s(D)$. We prove the Malle--Robinson conjecture by establishing the stronger inequality 
\begin{align*}
l(B) \leq c(B) < \ell^{s(Z(L)_\ell^F)} \leq \ell^{s(B)}.
\end{align*}
Checking Table \cite[Table 2]{Malle-Kessar}, we see that $Z(L)^F=\Phi_e^k$ is an $e$-torus in every case.  Let $\ell=3$ and let $B$ be a quasi-isolated 3-block. If $Z(L)^F=\Phi_e^k$, then $s(Z(L)_3^F)=k$. The $k$'s can be read off from Table \cite[Table 2]{Malle-Kessar} and we see that $c(B) < 3^{s(Z(L)_3^F)}$ in every case. Now, let $\ell=2$. Let $B=b_{G^F}(L, \lambda)$ be the $2$-block corresponding to line 1 of \cite[Table 2]{Malle-Kessar}. To prove the conjecture it suffices to take $s(Z(L)_2^F)$ again. Let $B=b_{G^F}(L, \lambda)$ now be the block corresponding to line 2 of \cite[Table 2]{Malle-Kessar}. To prove the conjecture we use line 2b of Table \cite[Table 2]{Malle-Kessar}. As seen in the proof of \cite[Proposition 3.5]{Malle-Kessar}, the $1$-Harish-Chandra series corresponding to line 2b lies  in $B$. By \cite[Proposition 2.17]{Malle-Kessar}, $Z(M)_2^F=\Phi_2^4 \subseteq D$ where $(M, \zeta)$ is the pair of line 2b. Note that $\Phi_2$ is always divisible by 2 unless $q$ is a power of 2. Since we are working in cross-characteristic and assume $\ell=2$, this can not be the case. Hence, $Z(M)_2^F=\Phi_2^4$ yields an elementary abelian 2-subgroup of rank 4. It follows that
\begin{align*}
l(B) \leq c(B) < 2^{s(Z(L)_2^F)} \leq 2^{s(B)}.
\end{align*}
If $e=2$, then the Ennola dual of line 2b gives a 1-split torus $\Phi_1^4$ which yields an elementary abelian 2-subgroup of rank 4. The rest of the proof did not depend $e$. 

Now, let $B$ be a unipotent block of $G^F$ with defect group $D$. Let $(L_1, \lambda_1),
\dots,$ $(L_r, \lambda_r)$ be the unipotent $e$-cuspidal pairs associated to $B$ by \cite[Théorème A (a)]{Enguehard}. We know that $r=1$ whenever $B$ is not the principal $\ell$-block by \cite[Théorème A (c)]{Enguehard}. Let $\ell=3$. In this case \[c(B)= |\operatorname{Irr}(B) \cap \mathcal{E}(G^F,s)|= \sum_{i=1}^r |\mathcal{E}(G^F,(L_i, \lambda_i)|. \]
This sum can be computed using Chevie \cite{Chevie}. Suppose that $(L,\lambda)$ is the unipotent $e$-cuspidal pair such that $B=b_{G^F}(L,\lambda)$. Then $Z(L)_3^F \subseteq D$ and we see that $c(B) < 3^{s(Z(L)_3^F)}$. Hence the Malle--Robinson conjecture holds. Now, let $\ell=2$. In this case, $\mathbb{Q}\operatorname{IBr}(\mathcal{E}_2(G^F,1))$ is generated by $\mathcal{E}(G^F,1) \cup \mathcal{E}(G^F,t_1) \cup \mathcal{E}(G^F,t_2)$, where $t_1$ and $t_2$ are quasi-isolated 2-elements of $G^*$ with $C_{G^*}(t_1)=C_3 \times A_1$ and $C_{G^*}(t_2)=B_4$. There are 3 different unipotent 2-blocks (see  \cite{Enguehard}) - the principal block, corresponding to a maximal torus, and two blocks of defect zero corresponding to the unipotent $e$-cuspidal characters of $G^F$. If $B=b_{G^F}(G, \chi)$ is one of the blocks of defect zero then $\operatorname{Irr}(B)=\{\chi \}$. Hence $l(B)=c(B)$ and the Malle--Robinson conjecture clearly holds. Furthermore, it follows that every Lusztig series of the form $\mathcal{E}(G^F,t)$ where $1 \neq t \in G^{*F}_2$, lies in the principal block. Suppose that $B$ is the principal block. Here, the conjectured upper bound has been proved in \cite[Proposition 6.10]{Malle-Robinson}.
%\begin{align*}
%c(B)&= |\operatorname{Irr}(B) \cap \left( \mathcal{E}(G^F,1) \cup \mathcal{E}(G^F,t_1) \cup \mathcal{E}(G^F,t_2) \right)|\\
% &= \sum_{i=1}^r |\mathcal{E}(G^F,(L_i, \lambda_i)| + |\mathcal{E}(G^F,t_1)| + |\mathcal{E}(G^F,t_2)| \\
% &= 83.
%\end{align*}
%Recall that the defect groups of principal $\ell$-blocks of a finite group are the Sylow $\ell$-subgroups of that group. (For the structure of Sylow subgroups of groups of Lie type in cross characteristic see \cite[Section 4.10]{GLS}). By Table \cite[Table 4.10.6]{GLS}, $F_4(q)$ contains a central product of 4 commuting $A_1$-type subgroups. This yields an elementary abelian 2-subgroup of rank 8 which is enough to prove the strict form of the Malle--Robinson conjecture for $B$. 
\end{proof}

\section{The quasi-isolated blocks of $E_6(q)$ and $^2\!E_6(q)$}
\noindent
Let $G$ be a simple, simply connected algebraic group of type $E_6$ defined over $\mathbb{F}_q$ with Frobenius endomorphism $F:G \to G$. Then $G^F= E_{6,sc}(q)$ or $^2\!E_{6,sc}(q)$ and the dual group $G^*$ (which is of adjoint type) contains semisimple elements whose centralisers are disconnected. However, recall that centralisers of $3'$-elements are connected, by \cite[Proposition 14.20]{Malle-Testerman}.\\

%Let as before $s \in G^{*F}$ be a semisimple quasi-isolated $l'$-element, where $l \in \{2,3\}$ and let $t \in C_{G^*}(s)_l^F$. We want to implement the same idea as for $F_4(q)$ for the rest of the exceptional types. \\

%\begin{remark} \label{Fails}
%For the proof of Theorem \ref{restriction} to work, our setup had to satisfy the following conditions. 
%\begin{enumerate}
%\item To go from $\hat{\mathcal{E}}(G^F,st)$ to $\hat{\mathcal{E}}(L^F,st)$ (see \cite[Theorem 9.16]{Cabanes-Enguehard}), we need
%\begin{align*}
%C_{G^*}^\circ(st) C_{G^*}(st)^F \subseteq L^*,
%\end{align*}
%where $L^*$ is an $F$-stable Levi subgroup of $G^*$, and 
%\item to go from $\hat{\mathcal{E}}(L^F,st)$ to $\hat{\mathcal{E}}(L^F,s)$, we need
%\begin{align*}
%t \in Z(L^*)_\ell^F.
%\end{align*}
%\end{enumerate}
%\end{remark} 
%\noindent
%Both of these can fail for $G^F=E_{6,sc}(q)$ and $^2\!E_{6,sc}(q)$. The failure of the first condition is not a big deal and only leads to slightly bigger generating sets. The failure of the second condition, however, creates problems that we have to resolve using completely different methods. \\

\noindent
We know a great deal about the Levi subgroups of $G$.

\begin{lem} \label{Levis simply-connected} Let $L^* \subseteq G^*$ be a proper Levi subgroup of $G^*$.  Then $[L^*,L^*]$ is simply connected unless $L^*$ is of type $A_2^2$, $A_2^2 \times A_1$ or $A_5$.
\end{lem}

\begin{proof}
This can be checked with Chevie \cite{Chevie}.
\end{proof}

\begin{remark} \label{quasi-isolated element of order 6 in E6}
It can be checked that every quasi-isolated element $z \in G^*$ of order 6 is of the form $z=st$ where $s$ is quasi-isolated of order 3 with $C_{G^*}^\circ(s)$ of type $D_4$, and $t$ is quasi-isolated of order 2 with $C_{G^*}(t)$ of type $A_5 \times A_1$ (or vice-versa). 
\end{remark}

\begin{prop} \label{generating set quasi-isolated of order 3}
Let $\ell=2$ and let $s \in G^{*F}$ be a quasi-isolated semisimple $2'$-element. Then $\hat{\mathcal{E}}(G^F,s) \cup (\bigcup_t \hat{\mathcal{E}}(G^F,st)$ generates  $\mathbb{Q}\operatorname{IBr}(\mathcal{E}_2(G^F,s))$,  where $t$ runs over the $2$-elements of $C_{G^*}(s)^F$ such that 
\begin{enumerate}
\item $st$ is quasi-isolated; and
\item $C_{G^*}(st)^F \neq C_{G^*}^\circ(st)^F$ or $C_{G^*}^\circ(st)$ is not of type $A$.
\end{enumerate} 
In particular, $l_s \leq |\mathcal{E}(G^F,s)|+ \sum_{1 \neq t}|\mathcal{E}(G^F,st)|$.
\end{prop}

\begin{proof}
We proceed as in the proof of Theorem \ref{generating set F4}. Let $1 \neq t \in C_{G^*}(s)_2^F$ be such that $st$ is not quasi-isolated in $G^*$. Let $\psi \in \mathcal{E}(G^F,st)$. Let $L^* \subsetneq G^*$ be the minimal Levi subgroup containing $C_{G^*}(st)$. Note that the proper Levi subgroups of $G^*$ are either of type $D$ or a product of groups of type $A$ (or maximal tori, in which case $t \in Z(L^*)$). Moreover, $C_{G^*}(t)$ is connected as $t$ is a $3'$-element. If $L^*$ is of type $A$ then $\psi^\circ \in \mathbb{Z}\hat{\mathcal{E}}(G^F,s)$ by Lemma \ref{t im Zentrum} (a) and the proof of Theorem \ref{restriction}. Now, suppose that $L^*$ is of type $D$. Since $[L^*,L^*]$ is simply connected by Lemma \ref{Levis simply-connected}, $C_{G^*}(st)=C_{L^*}(st)$ is connected. Therefore $\psi^\circ \in \mathbb{Q}\hat{\mathcal{E}}(G^F,s)$ follows from Lemma \ref{t im Zentrum} (c) and the proof of Theorem \ref{restriction}. Suppose that $1 \neq t \in C_{G^*}(s)_2^F$ does not satisfy condition (2). Then $\psi^\circ \in \mathbb{Q}\hat{\mathcal{E}}(G^F,s)$ by Theorem \ref{restriction} (b). Hence the assertion is proved.
\end{proof}
\noindent
To get an upper bound for $l_s$ where $s \in G^{*F}$ corresponds to the blocks numbered 14 and 15, we will use a slightly different approach. Note that, by \cite[Table 24.2]{Malle-Testerman}), $Z(G^F)=1$ if 
\begin{enumerate}[label=(\alph*)]
\item $G^F=E_{6,sc}(q)$ and $3 \nmid (q-1)$, or 
\item $G^F=\text{}^2\!E_{6,sc}(q)$ and $3 \nmid (q+1)$.
\end{enumerate}
In particular, $G_{ad}^F \cong G_{sc}^F$. The assertion of Theorem \ref{Theorem A} for a semisimple, quasi-isolated $s \in G^{*F}$ corresponding to the blocks 14, 15 therefore follows from the following result.
%Note that $C_{G^*}(s)^F=\Phi_1\Phi_2.^2\!D_4(q)$ only occurs if the conditions of Proposition \ref{finite centralizer equality} are satisfied, as follows from \cite[Table 4.7.3A]{GLS}.

\begin{thm} \label{E_6 adjoint type}  Let $G$ be simple of adjoint type $E_6$ defined over $\mathbb{F}_q$ with Frobenius endomorphism $F: G \to G$. Let $\ell \nmid q$ be a bad prime for $G$ and let $s \in G^{*F}$ be a semisimple $\ell'$-element. Then $\hat{\mathcal{E}}(G^F,s)$ is a generating set of $\mathbb{Q}\operatorname{IBr}(\mathcal{E}_\ell(G^F,s))$. In particular, $l_s \leq |\mathcal{E}(G^F,s)|$
\end{thm}

\begin{proof} Note that there are no quasi-isolated elements in $G^*$ of order greater than 6. Further, the quasi-isolated elements of $G^*$ of order 6 are of the form $xy$ where $x$ is a semisimple quasi-isolated element of order 2 and $y$ is a generator for $Z(G^{*F}$. In particular, $C_{G^*}(xy)$ is of type $A$. Let $\ell=2$. If $1 \neq s \in G^{*F}$ is a semisimple quasi-isolated $2'$-element, then we can use the proofs of Section 3. Let $1 \neq s \in G^{*F}$ be a semisimple non-quasi-isolated $2'$-element and let $t \in C_{G^*}(s)_2^F$. Since $st$ is not quasi-isolated, there is a Levi subgroup $L^*$ minimal with respect to $C_{G^*}(st) \subseteq L^*$. Suppose $L^*$ is of type $A$, then $C_{G^*}(st)=C_{L^*}(st)$ is a Levi subgroup and hence $L^*=C_{G^*}(st)$. If $L^*$ is of type $D$, then $C_{G^*}(st)$ is of type $A$ and we can conclude as we did before. 

Suppose $\ell=3$ now. If $1 \neq s \in G^{*F}$ is a quasi-isolated $3'$-element, then $C_{G^*}(st)$ is of type $A$ for every $t \in C_{G^*}(s)_3^F$ since $C_{G^*}(st)=C_{C_{G^*}(s)}(t)$ and $C_{G^*}(s)$ is of type $A$. Let $1 \neq s \in G^{*F}$ be a semisimple non-quasi-isolated $3'$-element and let $t \in C_{G^*}(s)_3^F$. The assertion follows because 3 is a good prime for every proper Levi subgroup of $G^*$. 

Suppose that $1=s$ (so we are talking about the unipotent blocks). For $\ell \in \{2,3\}$ the centralisers of semisimple $\ell$-elements are either Levi subgroups of $G^*$ or of type $A$ and since $G$ is of adjoint type they are connected. Therefore we can conclude as before.
\end{proof}

\noindent

\begin{thm} \label{Conjecture E6}
Let $G$ be a simple, simply connected algebraic of type $E_6$ defined over $\mathbb{F}_q$ with Frobenius endomorphism $F: G \to G$ 
%such that $G^F=E_6(q)$. Then Table \ref{Table: Conjecture bad prime E_6(q)} gives an upper bound for $l(B)$ and a lower bound for $s(B)$ for every quasi-isolated $\ell$-block of $G^F$. In particular, 
Then the Malle--Robinson conjecture holds for the quasi-isolated blocks of $G^F$ and $G^F/Z(G^F)$.
\end{thm}

Recall Notation \ref{notation}.

\begin{proof}
We demonstrate the proof for $G^F=E_6(q)$ and $E_6(q)/Z(E_6(q))$. The proofs for $^2\!E_6(q)$ and $^2\!E_6(q)/Z(^2\!E_6(q))$ are similar.

%except for the relative Weyl group $D_4.3$ in the cases 4 and 9 of table 4, where we use $|\operatorname{Irr}(D_4.3)| \leq$ |Irr$(D_4 \times 3)|$ to get an upper bound on $c(B)$.
We can determine $c(B)$ by checking \cite[Table 3]{Malle-Kessar}. If $B=b_{G^F}(L, \lambda)$ is a unipotent block or a non-unipotent quasi-isolated block numbered 1, 2, 4, 5, 8, 9, 10, 11, 12, 14 or 15 then $s(Z(L)_\ell^F)$ suffices to establish the conjectured upper bound.
For the block $B$ numbered 7 we use the 1-cuspidal pair $(L, \lambda)$ in line 1 (see the proof of \cite[Proposition 4.3]{Malle-Kessar}). We have $L=C_G^\circ(Z(L)_\ell^F)$ and $\lambda$ is of central $\ell$-defect. By combining \cite[Proposition 2.13 (a)]{Malle-Kessar} and \cite[Proposition 2.16 (3)]{Malle-Kessar}, the pair $(L, \lambda)$ satisfies the conditions of \cite[Proposition 2.12]{Malle-Kessar}. Hence, $(Z(L)_2^F,b)$ is a $B$-Brauer pair where $b$ is the block of $L$ containing $\lambda$. In particular, $Z(L)_2^F= \Phi_1^6 \subseteq D$ where $D$ is a defect group of $B$.  For case 3 we use the 2-cuspidal pair from case 8. The blocks numbered 5, 11 and 13 have to be dealt with using completely different methods (see Proposition \ref{c(b) and s(B) exceptions} and Corollary \ref{Theorem A and B for E6 and ell=3}).

Let $\bar{B}$ now be a quasi-isolated block of $H=G^F/Z(G^F)$ with defect group $\bar{D}$ dominated by a quasi-isolated block $B$ of $G^F$ not of type 5, 11 and 13 (for these exceptions see Proposition \ref{c(b) and s(B) exceptions} and Corollary \ref{Theorem A and B for E6 and ell=3}). By \cite[Theorem (9.9)(c)]{Navarro}, $l(\bar{B})=l(B)$ and $\bar{D}$ is of the form $DZ(G^F)/Z(G^F)$, for a defect group $D$ of $B$. Suppose that $\ell=2$. Since $|Z(G^F)|$ is either 1 or 3, $D \cap Z(G^F)=\{1\}$. Thus, $DZ(G^F)$ is a direct product and it follows that $D Z(G^F)/Z(G^F) \cong D$, i.e. $s(\bar{D})=s(D)$. Thus, the conjecture holds for $\bar{B}$ since it holds for $B$. If $\ell=3$ and $e=2$ (i.e. $B$ is dominated by a block of type 14 or 15), then $Z(G^F)=1$. Thus, $\bar{B}=B$ and we are done. 
\end{proof}
\text{}

\begin{center}
\textsc{\large{The blocks numbered $5$ and $11$}} \\
\end{center}

\noindent
Let $G$ be a connected reductive algebraic group (only for this exposition) and let $G^*$ be a dual group. Let $W$ and $W^*$ be the Weyl groups of $G$ and $G^*$ respectively. By \cite[Proposition 4.2.3]{Carter} there is a natural isomorphism $W \cong W^*$. This isomorphism yields a canonical isomorphism between $N_{G^*}(L^*)/L^*$ and $N_G(L)/L$. Now, fix a semisimple $\ell'$-element s $\in G^{*F}$ and let $L^*=C_{G^*}(Z^\circ(C_{G^*}^\circ(s)))$ be the minimal Levi subgroup of $G^*$ containing $C_{G^*}^\circ(s)$. Furthermore set $N^*=C_{G^*}(s)^F.L^*$ and let $L$ be a dual of $L^*$ in $G$. Define $N$ to be the subgroup of $N_G(L)$ containing $L$ such that $N/L$ corresponds to $N^*/L^*$ via the canonical isomorphism between $N_{G^*}(L^*)/L^*$ and $N_G(L)/L$. \\

Let $\ell \nmid q$ be a prime. We denote the sum of the block idempotents of the $\ell$-blocks contained in $\mathcal{E}_\ell(G^F,s)$ and $\mathcal{E}_\ell(L^F,s)$ by $e_s^{G^F}$ and $e_s^{L^F}$ respectively. 

\begin{thm}[{Bonnafé-Dat-Rouquier, \cite[Theorem 7.7]{BDR}}] \label{BDR} Let the notations be as above. Then there exists a Morita equivalence
\begin{align*}
\mathcal{O}G^Fe_s^{G^F} \sim \mathcal{O}N^Fe_s^{L^F}
\end{align*}
together with a bijection $b \mapsto b'$ between the $\ell$-blocks of both sides, preserving defect groups and such that $\mathcal{O}G^Fb$ is Morita equivalent to $\mathcal{O}N^Fb'$.
\end{thm}

\begin{remark}
Recently a gap was found in the proof of the original result \cite[Theorem 7.7]{BDR}. However the problem arises only (in a very specific case) when groups of type $D_n$ ($n \geq 4$) are involved in $G$.
\end{remark}

Let $G$ now be a simple, simply connected algebraic group of type $E_6$ again. Let $s \in G^{*F}$ be a quasi-isolated element of order 3 with $C_{G^*}(s)^F=\Phi_3.^3\! D_4(q).3$ and let $B$ be of type 6 or 12 (see cite[Table 3]{Malle-Kessar}). We see that $L^*=C_{G^*}(s)^\circ$. Furthermore, $N/L$ is cyclic of order 3. Hence, we are in the situation of \cite[
Example 7.9]{BDR}. Thus, there exists a Morita equivalence $\mathcal{O}G^Fe_s^{G^F} \sim \mathcal{O}N^F e_1^{L^F}$ together with a bijection as in Theorem \ref{BDR} between the blocks on both sided that preserves defect groups and such that corresponding blocks are Morita equivalent. 
So every block contained in $\mathcal{E}_\ell(G^F,s)$ is Morita equivalent to a block of $N^F$ which itself covers a unipotent block of $L^F$.

\begin{remark}\label{covering BDR}
Let $H$ be a finite group and $K \unlhd H$. If $B$ is a block of $H$ covering a block $b$ of $K$ then $B$ has a defect group $D$ such that $D \cap K$ is a defect group of $b$ (see \cite[Theorem (9.26)]{Navarro}. We use this fact in the case where $H=N^F$ and $K=L^F$. 
\end{remark}

%Let $B$ be the  Bonnafé-Rouquier-Dat correspondant of a quasi-isolated block of $G^F$ of type 5, $5'$, $5''$ or 10, $10'$, $10''$ and let $D$ be a defect group of $B$. We see that to get a sufficient lower bound for $s(B)$, we will be done if we can find a sufficiently big elementary abelian 2-section in the defect groups $D \cap L^F$ of the unipotent blocks of $L^F$.

\begin{prop} \label{c(b) and s(B) exceptions}
Let $s \in G^{*F}$ be a quasi-isolated element of order 3 such that $C_{G^*}(s)^F=\Phi_3.^3\! D_4(q).3$. Let $B$ be of type 6 or 12. Then $c(B)=12$ and $4 \leq s(B)$. In particular, the Malle--Robinson conjecture holds in strong form for the blocks of type 5 and 11.
\end{prop}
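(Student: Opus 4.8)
The plan is to compute $c(B)$ and $s(B)$ separately, using the Morita equivalence to a block of $N^F$ established just above the statement. First I would pin down $c(B)$. Since $B$ (of type 6 or 12) is contained in $\mathcal{E}_\ell(G^F,s)$ for $s$ quasi-isolated of order $3$ with $C_{G^*}(s)^F = \Phi_3.{}^3\!D_4(q).3$, and we are in the situation of \cite[Example 7.9]{BDR}, every block of $G^F$ in $\mathcal{E}_\ell(G^F,s)$ is Morita equivalent to a block of $N^F$ covering a \emph{unipotent} block of $L^F$ with $L^F = \Phi_3 \times {}^3\!D_4(q)$ (up to central factors) and $N/L$ cyclic of order $3$. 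So $c(B)$ equals the number of ordinary irreducible characters in the corresponding block $b'$ of $N^F$ that come from the Lusztig series parametrised by $e$-cuspidal pairs associated to $B$. Concretely, by Clifford theory over the unipotent block $b$ of $L^F$ and the explicit knowledge of unipotent characters of ${}^3\!D_4(q)$ (which can be read off from \cite{Chevie} together with the $e$-Harish–Chandra data in \cite{Malle-Kessar} Table 3), I would count the characters lying above $b$, accounting for the action of $N/L \cong C_3$ and the resulting extensions/fusion. This should yield $c(B) = 12$; alternatively, one reads the number of $e$-cuspidal pairs and their relative Weyl groups directly off \cite[Table 3]{Malle-Kessar} for lines 6 and 12 and sums $|\operatorname{Irr}(W_{G^F}(L_i,\lambda_i))|$, which is the cleanest route.

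Next I would bound $s(B)$ from below by exhibiting an elementary abelian $\ell$-section of a defect group $D$ of $B$ of rank at least $4$. Here $\ell = 3$. Using Remark \ref{covering BDR}, the defect group $D'$ of $b'$ in $N^F$ satisfies $D' \cap L^F$ is a defect group of the covered unipotent block $b$ of $L^F$; since the Morita equivalence of Theorem \ref{BDR} preserves defect groups, $s(D) = s(D')$ and it suffices to work inside $N^F$. The covered block $b$ of $L^F = \Phi_3 \times {}^3\!D_4(q)$ is a unipotent block whose defect group contains the central torus part $\Phi_3$ together with a defect group of the corresponding unipotent block of ${}^3\!D_4(q)$ at the prime $3$; for $e = e_3(q) \in \{1,2\}$ the relevant $3$-block of ${}^3\!D_4(q)$ is the principal block (or a block with abelian defect of substantial rank), whose defect group has sectional $3$-rank at least $3$. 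Combined with the extra $\Phi_3$ factor from $Z(L)^F$ we get an elementary abelian $3$-section of rank $\geq 4$ inside $D' \cap L^F \leq D'$. More robustly, one can argue as in the proof of Theorem \ref{Conjecture F4}: take the $e$-cuspidal pair $(M,\zeta)$ feeding into $B$ with $Z(M)^F$ a large $e$-torus, invoke \cite[Proposition 2.17]{Malle-Kessar} to get $Z(M)_3^F \subseteq D$, and read off from \cite[Table 3]{Malle-Kessar} that $Z(M)^F = \Phi_e^4$, giving $s(D) \geq 4$ (noting $\Phi_e$ is divisible by $3$ since we are in cross-characteristic with $e = e_3(q)$).

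Then the conclusion is immediate: $l(B) \leq c(B) = 12 < 81 = 3^4 \leq 3^{s(B)}$, so the Malle--Robinson conjecture holds in strong form for $B$. The first inequality $l(B) \leq c(B)$ is exactly the generating-set statement: the set $\{\chi^\circ : \chi \in \operatorname{Irr}(B) \cap (\mathcal{E}(G^F,s) \cup \bigcup_{1\neq t}\mathcal{E}(G^F,st))\}$ spans $\mathbb{Q}\operatorname{IBr}(B)$, which here follows from Proposition \ref{generating set quasi-isolated of order 3} (the prime is $2$?\,—\,no, here $\ell=3$, so one instead uses the $\ell=3$ analysis; in fact for type 6 and 12 the relevant $t$ all have $C_{G^*}(st)$ of type $A$, so $\hat{\mathcal{E}}(G^F,s)$ alone generates and $c(B) = |\operatorname{Irr}(B)\cap\mathcal{E}(G^F,s)|$). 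Finally, the statement asserts the bound for the blocks of type $5$ and $11$ rather than $6$ and $12$: these are the dominated blocks in $G^F/Z(G^F)$ (or the Ennola-dual / reindexed versions) of the type $6$, $12$ blocks, and by \cite[Theorem (9.9)(c)]{Navarro} domination preserves $l(B)$ and the defect group mod $Z(G^F)$; since $\ell = 3$ and $|Z(G^F)| \mid 3$ the passage to the quotient can only decrease the defect group by a cyclic factor of order $3$, so $s$ drops by at most $1$, and $12 < 3^{3}$ still holds. The main obstacle is the exact determination of $c(B) = 12$: one must correctly handle the Clifford theory of $N/L \cong C_3$ acting on the unipotent characters of ${}^3\!D_4(q)$ in the relevant $e$-Harish–Chandra series, i.e. decide which unipotent characters are $C_3$-stable (and split into $3$ constituents) versus moved in orbits of size $3$ — this is the one genuinely case-specific computation, best done via \cite{Chevie} and cross-checked against \cite[Table 3]{Malle-Kessar}.
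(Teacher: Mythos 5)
There is a fundamental error at the heart of your proposal: the prime is $\ell = 2$, not $\ell = 3$. Since $s$ has order $3$ and must be a semisimple quasi-isolated $\ell'$-element, and $\ell$ is bad for $E_6$ (so $\ell \in \{2,3\}$), the only possibility is $\ell = 2$. Indeed, blocks of type 6 and 12 in \cite[Table 3]{Malle-Kessar} are $2$-blocks. You explicitly considered and rejected $\ell=2$ in your parenthetical remark, which led you astray.

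This mistake propagates through everything you wrote. For $c(B)$: the paper applies Proposition \ref{generating set quasi-isolated of order 3} (the $\ell=2$ statement) and finds that the generating set must include the series $\mathcal{E}(G^F,st)$ for the $2$-element $t$ with $C_{G^*}(st)^F = \Phi_3.A_1(q)A_1(q^3).3$; this satisfies the condition $C_{G^*}(st)^F \neq C_{G^*}^\circ(st)^F$ because of the nonconnected component of order $3$, so your claim that ``$\hat{\mathcal{E}}(G^F,s)$ alone generates'' is incorrect. The paper then computes $|\operatorname{Irr}(B) \cap \mathcal{E}(G^F,s)| = 8$ and $|\operatorname{Irr}(B) \cap \mathcal{E}(G^F,st)| = 4$, giving $c(B) = 12$.

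For the lower bound on $s(B)$: you want elementary abelian \emph{$2$-sections}, not $3$-sections, so the $\Phi_3$ factor in $Z(L)^F$ contributes nothing ($\Phi_3(q) = q^2+q+1$ is always odd), and there is no $e$-cuspidal pair with $Z(M)^F = \Phi_e^4$ at play here. The paper's argument is genuinely different: after reducing via Theorem \ref{BDR} and Remark \ref{covering BDR} to finding a rank-$4$ elementary abelian $2$-section in a Sylow $2$-subgroup of ${}^3\!D_4(q)$, it looks at the subgroup $C \cong (A_1(q) \times A_1(q^3))/S$ given by the $2'$-part of the centralizer of an involution in \cite[Table 4.5.1]{GLS}. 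The Sylow $2$-subgroups of the $A_1$-factors are generalized quaternion, whose abelianization is $C_2 \times C_2$, and the diagonal subgroup $S$ of order $2$ lies inside both the product of Sylow $2$-subgroups and the product of their commutator subgroups, giving the elementary abelian section $C_2^4$. This concrete construction has no counterpart in your sketch. Finally your inequality $12 < 81 = 3^4$ should read $12 < 16 = 2^4$, and your appeal to passage through $G^F/Z(G^F)$ dropping the rank by $1$ is not needed: $|Z(G^F)| \in \{1,3\}$ is coprime to $\ell = 2$, so the defect group is unchanged.
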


\begin{proof}
We demonstrate the proof for the blocks of type 5. Let $B$ be a block numbered 5. By Proposition \ref{generating set quasi-isolated of order 3}, $c(B)=| \operatorname{Irr}(B) \cap \left( \mathcal{E}(G^F,s) \cup \mathcal{E}(G^F,st) \right)|$ where $t \in C_{G^*}(s)_2^F$ such that $C_{G^*}(st)^F=\Phi_3.A_1(q)A_1(q^3).3$. It can be shown that $|\operatorname{Irr}(B) \cap \mathcal{E}(G^F,s)|=8$ and $|\operatorname{Irr}(B) \cap \mathcal{E}(G^F,st)|=4$.
%To get an upper bound on $c(B)$, we also need to know the decomposition of $\mathcal{E}(G^F,z)$ into 2-blocks where $z \in G^*$ with $C_{G^*}(z)^F= \Phi_3.A_1(q)A_1(q^3).3$, since the corresponding Lusztig series is part of the generating set for $\mathcal{E}_2(G^F,s)$. This series decomposes into the three 2-blocks from above; each one containing 4 out of the 12 irreducible characters of $\mathcal{E}(G^F,z)$.
Hence, $c(B)=12$.

For the lower bound on $s(B)$ we use Theorem \ref{BDR} and the classification of unipotent blocks in bad characterstic obtained by Enguehard \cite{Enguehard}. Let $D$ be a defect group of $B$. We are interested in elementary abelian 2-sections of $D$. By Theorem \ref{BDR} we can reduce this to the study of defect groups of the Bonnafé--Dat--Rouquier correspondent block of $N^F$ which itself covers a unipotent block of $L^F$. By Remark \ref{covering BDR}, we are done if we can find a sufficiently large elementary abelian 2-section in the defect groups of this unipotent block of $L^F$. We can furthermore reduce this to the study of the defect groups of the unipotent blocks of the group $^3\!D_4(q)=[L,L]^F$ as can be seen as follows. Restriction of characters gives a bijection $\mathcal{E}(L^F,1) \to \mathcal{E}([L,L]^F,1)$ (see e.g. \cite[Proposition 13.20]{Digne-Michel}). By the character-theoretic characterization of covering blocks (see \cite[Theorem (9.2)]{Navarro}), we know that the unipotent blocks of $L^F$ cover the unipotent blocks of $[L,L]^F$.  By the classification of unipotent blocks in \cite{Enguehard}, the only unipotent 2-block of $^3\!D_4(q)$ is the principal block. So it is enough to show that the Sylow 2-subgroups of $^3\!D_4(q)$ have an elementary abelian section of order 16. 
Checking \cite[Table 4.5.1]{GLS}, we see that there is a subgroup $C$ (the $p'$-part of the centralizer of an involution of $^3\!D_4(q)$) of type $(A_1(q) \times A_1(q^3))/S$ such that $Z(A_1(q))=Z(A_1(q^3))=\langle m \rangle$ and $S:=\{(1,1),(m,m)\}$. Since the two $A_1$-factors are of simply connected type their Sylow 2-subgroups   - denoted by $Q$ and $Q'$ respectively - are generalized quaternion. Clearly, $m$ is contained in both of them and is moreover also contained in their commutator subgroups. Hence, $S$ is contained in $Q \times Q'$ and in $[Q,Q] \times [Q',Q']$. In particular $(Q \times Q')/S$ is a Sylow 2-subgroup of $C$ and is therefore contained in a Sylow 2-subgroup of $^3\!D_4(q)$. We have
\begin{align*}
((Q \times Q')/S) /([Q,Q] \times [Q',Q']/S) &\cong (Q \times Q') /([Q,Q] \times [Q',Q']) \\
&\cong Q/[Q,Q] \times Q'/[Q',Q'] \\
&\cong C_2 \times C_2 \times C_2 \times C_2,
\end{align*}
where the last isomorphism is a general property of generalized quaternion groups. Hence, $4 \leq s(B)$.
\end{proof} \text{} \\
\begin{center}
\textsc{\large{The block numbered $13$}} 
\end{center}
\noindent
We demonstrate the ideas for $G^F=E_{6,sc}(q)$. The arguments for $^2\!E_{6,sc}(q)$ are similar. Let $B=b_{G^F}(L, \lambda)$ be the block of $G^F$ numbered 13. In particular, $\ell=3$ and $e=1$ and $B$ corresponds to $s \in G^{*F}$ with $C_{G^*}(s)=A_5 \times A_1$.
%Let $G^F=E_{6,sc}(q)$ in this section (the proofs for $^2\!E_{6,sc}(q)$ are similar). We still need to prove the assertions of Theorems \ref{Theorem A} and \ref{Theorem B} when $\ell=3$, $e=1$ and $C_{G^*}(s)=A_5 \times A_1$ (respectively $e=2$ for $^2\!E_{6,sc}(q)$).
% are the cases where $s=1$ or $s \in G^{*F}$ is quasi-isolated with $C_{G^*}(s)=A_5 \times A_1$ and $e=1$. 
To prove the assertions of Theorems \ref{Theorem A} and \ref{Theorem B} for $s$ and $B$ respectively, we will use block theory to shift from the simply connected group to their dual group, which is of adjoint type. Here, the problems with disconnected centralisers do not arise. We will proceed as follows:
\begin{enumerate}
\item Determine an upper bound on $\ell(B)$ via the adjoint groups.
\item Determine a lower bound on $s(B)$ via the simply-connected groups.
\end{enumerate}
This approach is supported by the following diagram (which follows from \cite[Proposition 24.21]{Malle-Testerman} for example).
\begin{center}
\begin{tikzcd}
G_{ad}^F & G_{sc}^F \arrow[twoheadrightarrow]{d} \\
\left[G_{ad}^F,G_{ad}^F\right] \arrow[hookrightarrow]{u} & G_{sc}^F/Z(G_{sc}^F) \arrow[leftrightarrow]{l}[swap]{\cong} 
\end{tikzcd}
\end{center} 
By the theory of dominating blocks (see e.g. \cite[Chapter 9]{Navarro}), $B$ dominates a unique block $\overline{B}$ of $G_{sc}^F/Z(G_{sc}^F)$ with a defect group $\overline{D}=D/Z(G_{sc}^F)$. The isomorphism  $\left[G_{ad}^F,G_{ad}^F\right] \cong G_{sc}^F/Z(G_{sc}^F)$ yields a block of $\left[G_{ad}^F,G_{ad}^F\right]$ isomorphic to $\overline{B}$ with the same defect group. We denote that block by $\overline{B}$ again. By the theory of covering blocks (see e.g. \cite[Chapter 9]{Navarro}), $\overline{B}$ is covered by a unique block $\tilde{B}$ with a defect group $\tilde{D}$ satisfying $\tilde{D} \cap \left[G_{ad}^F,G_{ad}^F\right]= \overline{D}$. We can say even more. Let $T(\overline{B})$ be the inertial group of $\overline{B}$ in $G_{ad}^F$. Since $G_{ad}^F/\left[G_{ad}^F,G_{ad}^F \right]$ is a group of order 3, there are only two options for $T(\overline{B})$. 
\begin{center}
(1) $T(\overline{B})=\left[G_{ad}^F,G_{ad}^F \right]$ \text{ }\text{ }\text{ }\text{ } or \text{ }\text{ }\text{ }\text{ } (2) $T(\overline{B})=G_{ad}^F$
\end{center}
If we are in case (1), then $\tilde{D}=\overline{D}$ and $l(\tilde{B})=l(\overline{B})$ by \cite[(9.14) Theorem]{Navarro}. In particular, $|\tilde{D}|=\frac{|D|}{3}$. If we are in case (2), then $d(\tilde{B})=d(\overline{B})+1$ by \cite[(9.17) Theorem]{Navarro} and $l(\overline{B}) \leq 3 \cdot l(\tilde{B})$ by Clifford theory and the fact that every irreducible Brauer character of $\left[G_{ad}^F,G_{ad}^F\right]$ is covered by an irreducible Brauer character of $G_{ad}^F$. In particular, $|\tilde{D}|=|D|$.

Since $l(\overline{B})=l(B)$, we have $l(B)=l(\tilde{B})$ in case (1) and $l(B) \leq 3 \cdot l(\tilde{B})$ in case (2). By Theorem \ref{E_6 adjoint type}, $c(\tilde{B}):=|\operatorname{Irr}(B) \cap \mathcal{E}(G^F,s)|$ is an upper bound for $l(\tilde{B})$. Hence, in case (1) it suffices to show
\begin{align*}
l(B) = l(\tilde{B}) \leq c(\tilde{B}) \leq 3^{s(B)},
\end{align*}
and in case (2) it suffices to show
\begin{align*}
l(B) \leq 3 \cdot l(\tilde{B}) &\leq 3 \cdot c(\tilde{B}) \leq \ell^{s(B)} \\
\text{or } c(\tilde{B}) &\leq 3^{s(B)-1}.
\end{align*}

\begin{remark} Let $B$ be an arbitrary block of $G_{sc}^F$ corresponding to a semisimple $\ell'$-element $s \neq 1$. Let $\pi$ denote the projection from $G_{sc}$ to $G_{ad}$. In general it is not known if $\tilde{B}$ corresponds to $\pi(s)$. So far, this has only been proved for unipotent $\ell$-blocks when $\ell$ is a good prime for $G$
(see \cite[Theorem 12]{Cabanes-Enguehardunipotent}). 
\end{remark}
\noindent
In any case, we are not able to immediately transition from $B$ to $\tilde{B}$ as we lack the necessary theory. However, we know that $|\tilde{D}|$ is either $|D|$ or $\frac{|D|}{3}$. Hence, if we denote set of blocks of $G_{ad}^F$ with defect groups of order $|D|$ or $\frac{|D|}{3}$ by $\mathcal{S}(B)$ then $\tilde{B} \in \mathcal{S}(B)$.

We can determine $\mathcal{S}(B)$ using $e$-Harish-Chandra theory. First note that similar arguments as in the proofs in \cite{Malle-Kessar} also work for the groups of adjoint type and are, in fact, much easier (since there are no disconnected centralisers). Furthermore, the results in \cite{Malle-Kessar} can easily be extended to non-quasi-isolated blocks. In particular, if an arbitrary block $\tilde{B}$ (with defect group $\tilde{D}$) of $G_{ad}^F$ corresponds to the $e$-cuspidal pair $(\tilde{L}, \tilde{\lambda})$ of $G_{ad}$ then $Z(\tilde{L})_\ell^F \subseteq \tilde{D}$ and the order of $\tilde{D}$ is known.

To prove the Malle--Robinson conjecture for $B$ it therefore suffices to prove
\begin{align*}
c(B') \leq 3^{s(B)-1} \text{ }\text{ }\text{ }\text{ }\forall B' \in \mathcal{S}(B).
\end{align*}
%If $\tilde{B}$ is contained in $\mathcal{E}_\ell(G_{ad}^F,s)$, we obviously have $c(\tilde{B}) \leq |\mathcal{E}(G_{ad}^F,s)|$. Since $|\mathcal{E}(G_{ad}^F,s)|=\mathcal{E}(G_{sc}^F,s)|$ the assertion of Theorem \ref{Theorem A} follows. 
The sectional $3$-rank of $B$ is at least 6 since $Z(L)_3^F=\Phi_1^6 \subseteq D$. Moreover, $c(B')$ can be determined for every $B' \in \mathcal{S}(B)$.
It turns out that
\begin{align*}
c(B') \leq |\operatorname{Irr}(B) \cap \mathcal{E}(G^F,s)| < 3^{s(Z(L)_3^F)-1}=243 \leq 3^{s(B)-1} \qquad \forall B' \in \mathcal{S}(B),
\end{align*}
where $s$ is a quasi-isolated element of $G_{sc}^F$ associated to $B$. Hence the Malle--Robinson conjecture holds for $B$. 
Let $\overline{B}$ be the block of $G_{sc}^F/Z(G_{sc}^F)$ with defect group $\overline{D}$ dominated $B$. Then $\overline{D} \cong D/Z(G_{sc}^F)$. Since $Z(G_{sc}^F)=C_3$, $s(\overline{D})$ is either $s(D)$ or $s(D)-1$. So to show the conjecture for the block $\overline{B}$ of $G_{sc}^F/Z(G_{sc}^F)$ it certainly suffices to show that 
\begin{align}
c(B') \leq 3^{s(B)-2} \qquad \forall B' \in \mathcal{S}(B).
\end{align}
Since $3^{s(B)-2} \geq 3^{6-2}=81$ is greater than the size of every Lusztig series of $G_{ad}^F$, the conjecture holds for $\overline{B}$ by the same arguments as above. 

%Now, let $B=b_{G^F}(L,\lambda)$ be the block numbered $8_u$. The defect group $D$ of this block is an extension of $Z(L)_3^F$ by a cyclic group of order 3. It turns out that 
%\[c(B') \leq |\mathcal{E}(G_{sc}^F,(L, \lambda)|=|\operatorname{Irr}(B) \cap \mathcal{E}(G^F,1)| =  3^{s(Z(L)_3^F)-1}=3 \qquad \forall B' \in \mathcal{S}(B).\]
%Hence the Malle--Robinson conjecture holds for $B$. However, our methods do not suffice to prove the conjecture for the block $\bar{B}$ of $G_{sc}^F/Z(G_{sc}^F)$ dominated by $B$. In this case we can not prove inequality (2). We have $s(Z(L)_3^F)=2$ but there are blocks $B' \in \mathcal{S}$ with $c(B')>1$.

\noindent
As a corollary of this section we get the following result.

\begin{cor} \label{Theorem A and B for E6 and ell=3}
Let $\ell=3$, $e=1$ and $s \in G^{*F}$ be semisimple such that $C_{G^*}(s)=A_5 \times A_1$. Then 
$l_s \leq 3 \cdot |\mathcal{E}(G^F,s)|$. If $B$ is the block of $G^F$ numbered 13 or the block of $G^F/Z(G^F)$ dominated by that block then the Malle--Robinson conjecture holds for $B$.
\end{cor}

\begin{proof}
%For $e=2$ this follows from Theorem \ref{E_6 adjoint type}. So suppose that $e=1$.
We demonstrate the proof for the case $G^F=E_6(q)$. Here, $\mathcal{E}_3(G^F,s)= \operatorname{Irr}(B)$ where $B$ is the block of $G^F$ numbered 13. By the above we therefore have $l_s=l(B) \leq 3 \cdot \mathcal{E}(G^F,s)$. 
\end{proof}

With this the assertions of Theorem \ref{Theorem A} and Theorem \ref{Theorem B} have been proved for the groups of type $E_6$.

\section{The quasi-isolated blocks of $E_7(q)$}
\noindent
Let $G$ be a simple, simply-connected algebraic group of type $E_7$ defined over $\mathbb{F}_q$ with Frobenius endomorphism $F:G \to G$. Since the center of $G$ is disconnected, we encounter the same intricacies we encountered for $E_6$. \\

\noindent
Let $\ell$ be a bad prime for $G$  not dividing $q$.
Let $1 \neq s \in G^{*F}$ be a semisimple, quasi-isolated $\ell'$-element and let $t \in C_{G^*}(s)_\ell^F$. Checking Table 1, we see that elements of order 6 are not isolated and elements of order greater than 6 are not quasi-isolated in $G^*$. 
%Hence, if $1 \neq t$, then $st$ can not be isolated. Thus there exists a proper minimal Levi subgroup $M^*$ of $G^*$ containing $C_{G^*}^\circ(st)$. 
%\begin{remark}
%Contrary to the $E_6$-case, we gain nothing from this since we do not have an analogue of Proposition \ref{finite centralizer equality}. To have $C_{G^*}^\circ(st)^F=C_{G^*}(st)^F$ in general, we need $2 \nmid (q-1)$. In other words, $q$ would have to be a power of 2. Bear in mind that, since $\ell$ is a bad prime for $G$, either $\ell=2$ and $s$ is a semisimple 3-element, or $\ell=3$ and $s$ is a semisimple 2-element. The first case is in defining characteristic, for which Conjecture \ref{Malle-Robinson Conjecture} has been proved (see \cite[Theorem 3 (b)]{Malle-Robinson}) already and the latter case can not occur because there are no semisimple 2-elements in $E_7(2^r)$. 
%\end{remark}
\begin{lem} \label{Levis simply-connected E7} Let $L^* \subseteq G^*$ be a proper Levi subgroup of $G^*$.  Then $[L^*,L^*]$ is simply connected unless $L^*$ is of one of the following types: $D_6, A_5 \times A_1,A_3 \times  A_2 \times A_1,D_5 \times A_1,A_5,D_4 \times A_1,A_3 \times A_1^2,A_2 \times A_1^3, A_3 \times A_1,A_1^4,A_1^3$.
\end{lem}
\begin{proof}
This can be checked using Chevie \cite{Chevie}.
\end{proof}

\begin{prop}\label{generating set E7 l=3}
Let $\ell=3$ and let $s \in G^{*F}$ be a quasi-isolated semisimple $3'$-element. Then $\hat{\mathcal{E}}(G^F,s) \cup (\bigcup_t \hat{\mathcal{E}}(G^F,st)$ generates  $\mathbb{Q}\operatorname{IBr}(\mathcal{E}_3(G^F,s))$,  where $t$ runs over the $3$-elements of $C_{G^*}(s)^F$ such that 
\begin{enumerate}
\item $st$ is quasi-isolated; and
\item $C_{G^*}(st)^F \neq C_{G^*}^\circ(st)^F$ or $C_{G^*}^\circ(st)$ is not of type $A$.
\end{enumerate} 
In particular, $l_s \leq |\mathcal{E}(G^F,s)|+ \sum_{1 \neq t}|\mathcal{E}(G^F,st)|$.
\end{prop}

\begin{proof}
Similar to the proof of Proposition \ref{generating set quasi-isolated of order 3}. We use that the Levi subgroups of type $E_6$ have a simply connected derived subgroup.
\end{proof}

\begin{remark} \label{quasi-isolated of order 6 E7}
Let $z$ be a quasi-isolated element of order 6 in $G^*$. It can be shown (using Chevie for example) that $z=st$ where $s$ is quasi-isolated of order 2 with $[C_{G^*}^\circ(s),C_{G^*}^\circ(s)]=E_6$, and $t$ is quasi-isolated of order 3 with $C_{G^*}(t)=A_5 \times A_2$ (or vice-versa).
\end{remark}

\noindent

\begin{thm} \label{Conjecture E7}
Let $G$ be a simple, simply connected algebraic group of type $E_7$ defined over $\mathbb{F}_q$ with Frobenius endomorphism $F:G \to G$. Let $\ell \nmid q$ be a bad prime for $G$. 
%Then Table \ref{Table: Conjecture E7 bad primes} gives an upper bound for $c(B)$ and a lower bound for $s(B)$ for the quasi-isolated 3-blocks of $G^F$.
Then the Malle--Robinson Conjecture holds for the quasi-isolated $\ell$-blocks of $G^F$ and $G^F/Z(G^F)$.
\end{thm}

%############# später wieder einfügen, wenn ich diese kack Blöcke fertig habe ###################

%1 & 3 & (2,1) & A_5(q)A_2(q) & 33+27+18 & 7 \\ \hline
%2 & 3 & (2,1) & ^2\!A_5(q) ^2\!A_2(q) & 33+27+18 & 7 \\ \hline

%##################################################################

\begin{proof}
By Ennola duality we can assume that $e=1$. Let $\ell=3$. Except for the blocks numbered 2, 8, 9, 10 and 11, it is, first of all, easy to determine $c(B)$ and secondly, $s(Z(L)_3^F)$ suffices to establish the Malle--Robinson conjecture where $(L, \lambda)$ is the $e$-cuspidal pair associated to the given block. For the block numbered 2, line 2b of \cite[Table 4]{Malle-Kessar} yields a sufficient lower bound on $s(B)$. To prove the conjecture for the blocks of type 8, 9, 10 or 11, we need to determine how the Lusztig series corresponding to the $\ell'$-elements satisfying conditions (1) and (2) of Proposition \ref{generating set E7 l=3} decompose into $3$-blocks. Recall that
%the quasi-isolated elements of order 6 (see Proposition \ref{Generating set C_G^*(s)=E6}) decomposes into 3-blocks. Recall that
\small
\begin{align*}
|G^F|=q^{63} \Phi_1(q)^7 \Phi_2(q)^7 \Phi_3(q)^3 \Phi_4(q)^2 \Phi_5(q) \Phi_6(q)^3 \Phi_7(q) \Phi_8(q) \Phi_9(q) \Phi_{10}(q) \Phi_{12}(q) \Phi_{14}(q) \Phi_{18}(q)
\end{align*}
\normalsize
By the assumption on $e$, the only $\Phi_i(q)$, appearing in the expression above that are divisible by 3 are $\Phi_1, \Phi_3$ and $\Phi_9$. While $\Phi_1(q)$ can be divisible by higher powers of 3 (depending on $q$), $\Phi_3(q)$ and $\Phi_9(q)$ are only divisible by 3. Hence,
\begin{align*}
|G^F|_3=|\Phi_1(q)|_3^7 \text{ }|\Phi_3(q)|_3^3 \text{ }|\Phi_9(q)|_3=3^4 \text{ }|\Phi_1(q)|_3^7.
\end{align*}
Let $B=b_{G^F}(L,\lambda)$ be a block of type 8, 9, 10 or 11 and let $D$ be a defect group of $B$. By \cite[Theorem 1.2]{Malle-Kessar} we know that $D$ is a Sylow 3-subgroup of an extension of $Z(L)_3^F$ by $W_G(L, \lambda)$. Hence, $|D|=|Z(L)_3^F||W_G(L,\lambda)|_3$. By the definition of the defect of $B$ (see \cite[Definition (3.15)]{Navarro}) we have
\begin{align}
|G^F|_3/|D|= \text{min}\{\chi(1)_3 \, | \, \chi \in \operatorname{Irr}(B)\}.
\end{align}\label{defect minimum}
\noindent
We get the following table.

\begin{center} $
\begin{array}{|c|r|r|} \hline
B	&	|D|	&	|G^F|_3/|D| \\ \hline
8	&	3^4 \text{ } |\Phi_1(q)|_3^7	&	1 \\
9	&	3 \text{ } |\Phi_1(q)|_3^3 	&	3^3 \text{ } |\Phi_1(q)|_3^4 \\ \hline
10	&	3^2 \text{ } |\Phi_1(q)|_3^4 &	3^2 \text{ } |\Phi_1(q)|_3^3 \\
11	&	|\Phi_1(q)|_3	&	3^4 \text{ } |\Phi_1(q)|_3^6 \\ \hline
\end{array} $
\end{center}

We start  with the blocks numbered 8 and 9. Let $s \in G^{*F}$ be the semisimple element corresponding to the blocks numbered 8 and 9. By Proposition \ref{generating set E7 l=3}, $\hat{\mathcal{E}}(G^F,s) \cup \hat{\mathcal{E}}(G^F,st)$, where $t \in C_{G^*}(s)_3^F$ such that $C_{G^*}(st)^F=\Phi_1 A_2(q)^3.2$, generates $\mathbb{Q}\mathcal{E}_3(G^F,s)$. We claim that the series $\mathcal{E}(G^F,st)$ is contained in the block numbered 8. 
%In this case, $s(Z(L)_3^F)$ (see lines 8 and 9 of Table \ref{Table: Conjecture E7 bad primes}) is enough to establish the Malle--Robinson conjecture \ref{Malle-Robinson Conjecture} for these blocks. 
Let $\Psi_{st}$ denote the Jordan decomposition associated with $st$ (see \cite[Corollary 15.14]{Cabanes-Enguehard}). Let $\chi \in \mathcal{E}(G^F,st)$. By \cite[Remark 13.24]{Digne-Michel} we have
\begin{align*}
\chi(1)_3= \frac{|G^F|_3}{|C_{G^*}(st)^F|_3}\Psi_{st}(\chi)(1)_3.
\end{align*}
The right side of this equation can easily be computed and we observe that $\chi(1)_3 <  3^3 |\Phi_1(q)|_3^4$ for every $\chi \in \mathcal{E}(G^F,st)$.
So it follows from (2) that $\mathcal{E}(G^F,st)$ is fully contained in the block numbered 8. We argue similarly for the blocks of type 10 and 11. It can be shown that the Lusztig series corresponding to the quasi-isolated element of order 6 (appearing in the generating set for the union of the blocks of type 10 and 11) is contained in the blocks of type 10. 
%Unfortunately our approach does not prove the strong form of Conjecture \ref{Malle-Robinson Conjecture} for the blocks of type 11. Let $B=b_{G^F}(L, \lambda)$ be a block of type 11. Let $q$ be such that $|Z(L)_3^F|=\Phi_1(q)=3$. Then we have equality:
%\begin{align*}
%3=c(B)=3^{s(Z(L)_3^F)}=3.
%\end{align*}
%However, since the defect groups of the blocks of type 11 are cyclic (of order $|\Phi_1(q)|_3$) by Theorem \ref{parametrisation quasi-isolated bad prime} (c), the strong form of Conjecture \ref{Malle-Robinson Conjecture} holds by \cite[Proposition 3.1]{Malle-Robinson}. 

For the quasi-isolated blocks of $G^F/Z(G^F)$ we use the arguments of the proof of Theorem \ref{Conjecture E6}.
%Hence, we have $3=c(B)=s(Z(L)_3^F)$ for these $q$. In this case we use Theorem \ref{Usami} (a block-theoretic result) to show strict inequality.
%First of all, the blocks of type 11 are abelian (see \cite[Proposition 5.3]{Malle-Kessar}). Using the notation in Theorem \ref{Usami} we have $|E|=|W_{G^F}(L,\lambda)|=|A_1|=2$.  So we can apply Theorem \ref{Usami} below and get the strong form of the conjecture. Thus, the theorem is proved.

Let $\ell=2$ now. Since we assumed that $e=1$, a quasi-isolated 2-block of $G^F$ or $G^F/Z(G^F)$ is either of type $1_u$. $3_u$, 1, 2, respectively dominated by one of them. For the blocks numbered $1_u$ and $2_u$, $s(Z(L)_2^F)$ suffices to establish the conjecture. For the blocks numbered 1 and 2 we will argue as we did for the block numbered $13$ of $E_6(q)$. The assertion then follows from Corollary \ref{Proof Theorem A and B E7 l=2} below.
\end{proof} \text{} \\

\begin{center}
\textsc{\large{The blocks numbered $1$ and $2$}} 
\end{center}
\noindent
In this section we finish the proof of Theorem \ref{Conjecture E7} (therefore finishing the proof of the assertion of Theorem \ref{Theorem B} for $E_7(q)$) and the proof of the assertion of Theorem \ref{Theorem A} for $E_7(q)$. As before we can assume that $e=1$.

\begin{thm} \label{E_7 adjoint type}  Let $G$ be a simple algebraic group of adjoint type $E_7$ defined over $\mathbb{F}_q$ with Frobenius endomorphism $F: G \to G$. Let $\ell \nmid q$ be a bad prime for $G$ and let $s \in G^{*F}$ be a semisimple $\ell'$-element. Then $\hat{\mathcal{E}}(G^F,s)$ generates $\mathbb{Q}\operatorname{IBr}(\mathcal{E}_\ell(G^F,s))$, unless possibly if $\ell=2$ and 
\begin{enumerate}
\item $s=1$, or
\item $C_{G^*}(s)=D_6$. 
\end{enumerate}
In particular, $l_s \leq |\mathcal{E}(G^F,s)|$ unless $s$ satisfies (1) or (2).
\end{thm}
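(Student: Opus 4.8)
The plan is to mimic the structure of the proofs for Theorems~\ref{generating set F4} and \ref{E_6 adjoint type}: since $G$ is of adjoint type, every centraliser of a semisimple element is connected, so disconnectedness issues disappear and we only need to verify, case by case over the quasi-isolated $\ell'$-elements $s$ and the bad primes $\ell\in\{2,3\}$, that for each $1\neq t\in C_{G^*}(s)_\ell^F$ the hypotheses of Theorem~\ref{restriction} are met — whereupon $\{\chi^\circ\mid\chi\in\operatorname{Irr}(\mathcal E_\ell(G^F,s))\}$ is already spanned by $\hat{\mathcal E}(G^F,s)$ using \cite[(3.16) Lemma]{Navarro}. As in the earlier proofs I would split into the cases $s$ quasi-isolated versus $s$ not quasi-isolated. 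When $s$ is \emph{not} quasi-isolated, the minimal Levi $L^*\supseteq C_{G^*}(st)$ is proper; since $3$ is good for every proper Levi of $E_7$ and $2$ is good for proper Levis not involving a $D$-factor, I would invoke Lemma~\ref{t im Zentrum}(a) (using connectedness of $C_{G^*}(t)$) to get $t\in Z(L^*)$, except when the Levi has a $D_k$ factor and $\ell=2$, where I would instead observe from Lemma~\ref{Levis simply-connected E7} (together with Table~\ref{tabu:quasi-isolated elements}) that $C_{G^*}(st)$ is either central in $L^*$ or of type $A$ with $C_{G^*}(st)^F=C_{G^*}^\circ(st)^F$ (automatic here by connectedness), so Theorem~\ref{restriction}(a) or (b) applies.

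For $s$ quasi-isolated and $\ell=3$: consulting Table~\ref{tabu:quasi-isolated elements}, the $3'$ quasi-isolated classes have centraliser $A_5\times A_2$ (order $3$) or $D_6\times A_1$, $A_7$, $E_6$ (order $2$) or the order-$4,6$ classes; for such $s$ one checks that for every $3$-element $t$ the group $C_{G^*}(st)=C_{C_{G^*}(s)}(t)$ is of type $A$, or that $3$ is good for the relevant minimal Levi so Lemma~\ref{t im Zentrum}(a)/(c) gives $t\in Z(L^*)$ — the key input being that the Levi subgroups of type $E_6$ have simply connected derived subgroup, as used in Proposition~\ref{generating set E7 l=3}. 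For $\ell=2$ and $s$ quasi-isolated, the centralisers of order-$3$ classes are $A_5\times A_2$ and $A_2^3$ (and the order-$6$ class); here $C_{G^*}(st)$ with $t$ a $2$-element is again visibly of type $A$, and $2$ is good for type-$A$ Levis, so Theorem~\ref{restriction} applies, \emph{except} precisely when $s=1$ (the unipotent case) or $C_{G^*}(s)=D_6$ — these are the two genuine exceptions, because a $D_6$-centraliser (or the whole group in the unipotent case) can acquire a $2$-element $t$ whose minimal Levi has a $D$-factor on which $2$ is bad and for which $C_{G^*}(st)$ is neither central nor of type $A$, so none of the hypotheses of Theorem~\ref{restriction} is available.

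I expect the main obstacle to be the bookkeeping: one must enumerate, for each quasi-isolated $s$ and each class of $2$- or $3$-element $t\in C_{G^*}(s)^F$, the structure of $C_{G^*}(st)$ and of the minimal Levi $L^*$ containing it, and confirm that outside the two listed exceptions one always lands in case (a) or (b) of Theorem~\ref{restriction}. This is a finite check that can be carried out with \cite{Chevie} and Bonnaf\'e's classification \cite{Bonnafe}; the only subtlety is correctly identifying, when $\ell=2$ and a $D_k$-factor appears in $L^*$, whether $st$ is central in $L^*$ or $C_{G^*}(st)$ reduces to type $A$, and in the $s=1$ and $C_{G^*}(s)=D_6$ cases exhibiting explicitly that neither happens, which is what forces the exceptions. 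The final sentence $l_s\le|\mathcal E(G^F,s)|$ is then immediate since a spanning set of $\mathbb Q\operatorname{IBr}(\mathcal E_\ell(G^F,s))$ of size $|\mathcal E(G^F,s)|$ bounds the number of Brauer characters.
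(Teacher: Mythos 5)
Your proposal follows essentially the same route as the paper: a case split on $\ell\in\{2,3\}$ and on whether $s$ is quasi-isolated, invoking Theorem~\ref{restriction} together with Lemma~\ref{t im Zentrum} after noting that all $C_{G^*}(\cdot)$ are connected since $G^*$ is simply connected, with the exceptions being forced precisely by the occurrence of $C_{G^*}(st)=D_4\times A_1^2$ when $\ell=2$ and $s=1$ or $C_{G^*}(s)=D_6$. One small correction to your intermediate claim: $2$ is also a bad prime for a Levi of type $E_6$, not only for those with a $D$-factor (and the remark that $A_2^3$ is an order-$3$ centraliser is a slip — it is the order-$6$ one), but neither affects the outcome, since for $\ell=2$ the quasi-isolated elements of even order in an $E_6$ Levi still have centralisers of type $A$, so Theorem~\ref{restriction}(b) applies there just as the paper implicitly uses.
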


\begin{proof}
%Note that there are no quasi-isolated elements of order greater than 4 in $G$.
Suppose that $\ell=3$. Let $1 \neq s \in G^{*F}$ be a semisimple $3'$-element and let $t \in C_{G^*}(s)_3^F$. Then, either $C_{G^*}(st)=A_5 \times A_2$ or $st$ is not quasi-isolated in $G^*$. In the first case, we conclude as we did before for connected centralisers of type $A$. Hence, suppose that $st$ is not quasi-isolated in $G^*$. Let $L^*$ be the minimal Levi subgroup containing $C_{G^*}(st)$. In particular, $st$ is quasi-isolated in $L^*$. If $L^*$ is of type $E_6$ then either $t \in Z(L^*)$ or $C_{G^*}(st)$ is of type $A$. If $L^*$ is of classical type, then 3 is a good prime for $L^*$. So we are done by Theorem \ref{restriction}. 

Let $\ell=2$. Let $s \in G^{*F}$ be a semisimple $2'$-element and let $t \in C_{G^*}(s)_2^F$. If conditions (1) and (2) are not satisfied then either $t \in Z(L^*)$, where $L^*$ is the minimal Levi subgroup of $G^*$ containing $C_{G^*}(st)$, or $C_{G^*}(st)$ is of type $A$. However, if (1) or (2) are satisfied there exist $t \in C_{G^*}(s)_2^F$ such that $C_{G^*}(st)=C_{G^*}(t)=D_4 \times A_1^2$. In this case none of our methods can be applied.
%Note that $C_{G^*}(st)$ is of type whenever $C_{G^*}(s)$, $C_{G^*}(t)$ or $L^*$ is of type $A$. If $1 \neq s$ is a quasi-isolated $2'$-element then $C_{G^*}(s)$ and therefore $C_{G^*}(st)$ is of type $A$.
\end{proof}

Let $\ell=2$. For a 2-block $B$ of $G_{sc}^F$ with defect group $D$ we denote the set of blocks of $G_{ad}^F$ with defect groups of order $|D|$ or $\frac{|D|}{2}$ by $\mathcal{S}(B)$. For the blocks $1$ and $2$ we will use the same approach that we used for the block numbered 13 in the last section. Let $s \in G_{sc}^{*F}$ be a quasi-isolated element corresponding to the block numbered 1 (respectively 2). By \cite{Malle-Kessar}, $\mathcal{E}_2(G^F,s)$ consists of only one block. Let $B$ be numbered 1 (respectively 2) and let $D$ be a defect group of $B$. Note that the blocks corresponding to the two exceptions in Theorem \ref{E_7 adjoint type} do not lie in $\mathcal{S}(B)$. As before we define $c(B')=|\operatorname{Irr}(B') \cap \mathcal{E}(G_{ad}^F,s')|$, where $s' \in G_{ad}^{*F}$ is associated to the block $B'$ of $G_{ad}^F$. It turns out that
%The only Lusztig series of $G_{ad}^F$ whose cardinality is bigger than $2^{7-1}=64$ (for the block numbered 2 the 7 comes from the maximal 2-torus of line 2b in Table 6) is $\mathcal{E}(G_{ad}^F,1)$ with 76 elements. However, by the work of Enguehard \cite{Enguehard} we know the order of the defect groups. It turns out that the unipotent blocks do not lie in $\mathcal{S}$. Hence, we have 
\begin{align}
c(B')  \leq |\mathcal{E}(G^F,s)| < 2^{s(Z(L)_2^F)-1}=64 \leq 2^{s(B)-1} \qquad \forall B' \in \mathcal{S(B)}.
\end{align}

Let $\overline{B}$ be the block of $G_{sc}^F/Z(G_{sc}^F)$ dominated by $B$. For the block numbered 1 we have $s(Z(L)_2^F)=s(Z(L)_2^F/Z(E_{7,sc}(q))$ since $4 \mid (q-1)$ for $e=1$ (see \cite[Table 4]{Malle-Kessar}). Similarly we argue for the block numbered 2 using line 2b. Hence, it suffices to show 
\begin{align*}
c(B') \leq  |\mathcal{E}(G^F,s)|  < 2^{s(Z(L)_2^F)-1}=64  \qquad \forall B' \in \mathcal{S}. 
\end{align*}
However, this was established above when we proved the conjecture for the block $B$.

%Now, suppose that $B=b_{G^F}(L, \lambda)$ is the block numbered $1_u$ and let $D$ be one of its defect groups. Then $\mathcal{S}(B)=\{B_0'\}$, where $B_0'$ denotes the principal 2-block of $G_{ad}^F$. By \cite[page 233]{Geck-Hiss2}, $l(B_0') \leq 64 \leq |c(B)|=76$. 
%%Since $B$ is the principal $2$-block, $D$ is a Sylow $2$-subgroups of $G_{sc}^F$. It is easy to see that 
%%\[c(B') \leq |\mathcal{E}(G^F,1)|  \qquad \forall B' \in \mathcal{S}\]
%%However, there are blocks $B' \in \mathcal{S}$ with $c(B') > 2^{s(Z(L)_2^F)-1}=64$. 
%By \cite[Table 4.10.6]{GLS}, $G^F$ contains a central product of 7 commuting fundamental $\operatorname{SL}_2(q)$-subgroups. Arguing as in the proof of Proposition \ref{c(b) and s(B) exceptions} it follows that $s(B) \geq 14$. We clearly have 
%\[l(B_0') \leq 64 \leq |c(B)|=76 < 2^{14-1}. \] 
%Thus the inequality conjectured by Malle--Robinson holds for $B$ and the block of $G_{sc}^F/Z(G_{sc}^F)$ dominated by it. Let $B=b_{G^F}(L,\lambda)$ now be the block numbered $3_u$ and let $D$ be a defect group of $B$. By \cite[page 357]{Enguehard}, $D$ is a dihedral subgroup of $G^F$. By definition of $e=e_2(q)$ we see that $4 \mid |Z(L)_2^F|$. In particular, $D/Z(G_{sc}^F)$ is dihedral again. Thus, $s(D)=s(D/Z(G_{sc}^F))=2$. Furthermore, since $D$ we have $l(B) \leq 3$ (see \cite[Theorem 8.1]{Sambale}). It follows that $l(B) \leq 2 \cdot \mathcal{E}(G^F,(L, \lambda))=2^{s(B)}=4$.

\noindent
As a corollary of these arguments we have the following.

\begin{cor}\label{Proof Theorem A and B E7 l=2}
Let $\ell=2$ and let $s \in G^{*F}$ be a quasi-isolated semisimple $2'$-element. Then 
$l_s \leq 2 \cdot |\mathcal{E}(G^F,s)|$. Moreover, if $B$ is a quasi-isolated block of $G^F$ or $G^F/Z(G^F)$ then the Malle--Robinson conjecture holds for $B$.
\end{cor}

\begin{proof}
Similar to the proof of Corollary \ref{Theorem A and B for E6 and ell=3}.
\end{proof}

\section{The quasi-isolated blocks of $E_8(q)$}
\noindent
Let $G$ be a simple, simply connected algebraic group of type $E_8$ defined over $\mathbb{F}_q$ with Frobenius endomorphism $F: G \to G$. Recall that simple algebraic groups of type $E_8$ are both simply connected and adjoint.  We will therefore omit any specification of the isogeny type as we did in Section 3.

\begin{thm}\label{generating set E8}
Let $\ell \nmid q$ be a bad prime for $G$ and let $s \in G^{*F}$ be a semisimple quasi-isolated $\ell'$-element. Then $\hat{\mathcal{E}}(G^F,s) \cup (\bigcup_t \hat{\mathcal{E}}(G^F,st)$ generates  $\mathbb{Q}\operatorname{IBr}(\mathcal{E}_\ell(G^F,s))$,  where $t$ runs over the $\ell$-elements of $C_{G^*}(s)^F$ such that 
\begin{enumerate}
\item $st$ is quasi-isolated; and
\item  $C_{G^*}(st)$ is not of type $A$.
\end{enumerate} 
In particular, $l_s \leq |\mathcal{E}(G^F,s)|+ \sum_{1 \neq t}|\mathcal{E}(G^F,st)|$.
\end{thm}

\begin{proof}
Similar to the proofs of Theorem \ref{generating set F4} and Propositions \ref{generating set quasi-isolated of order 3} and \ref{generating set E7 l=3}.
%(i) Since 5 is a good prime for every proper Levi subgroup $L^*$ of $G^*$, the first part follows from Lemma \ref{t im Zentrum} (a). The second part follows since conditions (1) and (2) of Remark \ref{Fails} are satisfied.
%
%(ii) Note that $s,t \in Z(C_{G^*}(st))^F$. Set $n= \mid Z(C_{G^*}(st))^F/Z^\circ (C_{G^*}(st))^F \mid$. Now, $n$ can be read off from \cite{Luebeck} for every centralizer (of semisimple elements). It can be seen that $n=2,3,4$ or 5 in every case except for the centralizer of the quasi-isolated element of order 6 (where $n=6$). By the assumption on $C_{G^*}(s)$ and Remark \ref{quasi-isolated of order 6 E8}, it follows that $st$ is not quasi-isolated. Thus, either $(o(t),n)=1$ or $(o(s),n)=1$. In the first case, $t \in Z^\circ(C_{G^*}(st))$. Hence, $t \in Z(L^*)$ and we are done. If  $(o(s),n)=1$, then $s \in Z(L^*)$. In particular, $L^* \subset C_{G^*}(s)$ and it follows that $L^*$ is of classical type. In every case $C_{G^*}(st)=C_{L^*}(t)$ is a Levi subgroup of $G^*$ by Proposition \ref{Geck-Hiss Levi} and Proposition \ref{Levi in Levi is Levi}. By the minimality of $L^*$ we therefore have $C_{G^*}(st)=L^*$. Thus, $t \in Z(L^*)$. 
%The second part follows since conditions (1) and (2) of Remark \ref{Fails} are satisfied in every case.
\end{proof}

%\begin{prop} \label{Generating sets E6A2 and E7A1}
%Let $\ell \nmid q$ be a bad prime for $G$. Let $s \in G^{*F}$ be a quasi-isolated semisimple $\ell'$-element with $C_{G^*}(s)=E_6 \times A_2$ or $E_7 \times A_1$. Then $\hat {\mathcal{E}}(G^F,s)$ is a generating set of $\mathbb{Q}\operatorname{IBr}(\mathcal{E}_\ell(G^F,s))$. In particular, the number of irreducible Brauer characters in $\mathcal{E}_\ell(G^F,s)$ is less than or equal to $|\mathcal{E}(G^F,s)|$.
%\end{prop}
%
%\begin{proof}
%Similar to the proof of Proposition \ref{Generating set for D4}.
%\end{proof}

\begin{thm}\label{Conjecture E_8}
Let $G$ be a simple, simply connected algebraic group of type $E_8$ defined over $\mathbb{F}_q$ with Frobenius endomorphism $F: G \to G$.
%, or of type $D_4$ with a Frobenius endomorphism $F$ such that $G^F=\text{} ^3\!D_4(q)$.
%Let $e=e_\ell(q)=1$. Then Table \ref{table:Quasi-isolated blocks F4 conjecture} gives $c(B)$ and a lower bound for $s(B)$ for every quasi-isolated $\ell$-block $B$ of $G^F$.
Let $\ell \nmid q$ be a bad prime for $G$. Then the Malle--Robinson conjecture holds for  the quasi-isolated $\ell$-blocks of $G^F$ unless, possibly, if $B$ is the block numbered $3$ or $8$ in the table in \cite[page 358]{Enguehard}.
\end{thm}

\begin{proof}
First, suppose that $\ell=2$. Let $B=b_{G^F}(L, \lambda)$ be a quasi-isolated 2-block of $G^F$. Except for the blocks of type $3_u$, $8_u$, 2, 8 and 9, $s(Z(L)_2^F)$ suffices to establish the conjectured upper bound.
%For the blocks 2, 8 and 9, $s(Z(L)_2^F)$ is not enough (e.g. $c(B_2)=30$ but $2^{s(Z(L)_2^F)}=2^4=16$ in this case). 
Let $B=b_{G^F}(L, \lambda)$ be numbered 2, 8 or 9. Recall that we have a normal series
\begin{align*}
Z(L)_l^F \unlhd P:=C_D(Z(L)_2^F) \unlhd D,
\end{align*}
where $D$ is a defect group of $B$. Furthermore, by \cite[Proposition 2.1 and 2.7]{Malle-Kessar}, $P$ is a defect group of the block of $L^F$ containing $\lambda$.  Now, in all cases (2, 8 and 9) $C_{L^*}(s)$ is a maximal torus of $L^*$. Let $M \subset G$ be an $F$-stable torus dual to $C_{L^*}(s)$. There is a Morita equivalence 
\begin{align*}
\mathcal{O}L^Fe_s^{L^F} \sim \mathcal{O}M^Fe_1^{M^F},
\end{align*}
(see Theorem \ref{BDR}) with a bijection between the blocks on both sides preserving defect groups. In particular, $s(D)=s(D')$ where $D'$ is a defect group corresponding to $D$ by this bijection.
Since $M$ is a torus, there is only one block on the right side of the equivalence, namely the principal block of $M^F$. Every defect group of that block is a Sylow 2-subgroup of $M^F$. Now, the structure of $M^F$ can be read off from \cite[Table 5]{Malle-Kessar}. Hence, we can determine $s(D)$ and observe that $c(B) \leq 2^{s(D)}$.
If $B=b_{G^F}(L, \lambda)$ is numbered $3_u$ or $8_u$ then we have $c(B)=|\operatorname{Irr}(B) \cap \mathcal{E}(G^F,1)|=6$ but $s(Z(L)_2^F)=2$. So we either need a better bound for $l(B)$ or a better understanding of the defect groups of $B$ to establish the conjectured upper bound (or the conjecture is false). So far both are missing.

Now, suppose that $\ell=3$ or $5$. Let $B=b_{G^F}(L, \lambda)$ be a quasi-isolated $\ell$-block of $G^F$. In this case, $s(Z(L)_\ell^F)$ suffices to establish the conjectured upper bound.
\end{proof}
\noindent

\section{Proofs of the main statements}
\noindent
The proofs of Theorems \ref{Theorem A} and \ref{Theorem B} are given by combining the results of the previous sections.

\begin{proof}[Proof of Theorem \ref{Theorem A}]
The assertion of Theorem \ref{Theorem A} follows from Theorem \ref{generating set F4}, Proposition \ref{generating set quasi-isolated of order 3}, Corollary \ref{Theorem A and B for E6 and ell=3}, Proposition \ref{generating set E7 l=3}, Corollary \ref{Proof Theorem A and B E7 l=2} and Theorem \ref{generating set E8}.
\end{proof}

\begin{proof}[Proof of Theorem \ref{Theorem B}]
%If $\ell \mid q$ the assertion follows from \cite[Proposition 6.1]{Malle-Robinson}. From now on $\ell \nmid q$. If $H$ is an exceptional covering group the assertion follows from \cite[Proposition 6.4]{Malle-Robinson}. For the groups of Lie type $^2\!B_2$, $^2\!G_2$, $G_2$, $^3\!D_4$ and $^2\!F_4$ the assertion follows from \cite[Proposition 6.5]{Malle-Robinson}. So we can assume that $H=G^F/Z$, where $G$ is a simple, simply connected algebraic group of exceptional type ($F_4,E_6,E_7$ or $E_8$) and $Z \subseteq Z(G^F)$ is a central subgroup. 
If $\ell$ is good for $G$ the assertion follows from \cite[Theorem B]{Ruwen} and if $\ell$ is bad for $G$ the assertion follows from Theorems \ref{Conjecture F4}, \ref{Conjecture E6}, \ref{Conjecture E7} and \ref{Conjecture E_8}.
\end{proof}
\noindent
Before we prove the Corollary to Theorem \ref{Theorem B} we introduce the object in question. Let $H$ be a finite group and let $B$ be an $\ell$-block of $H$. Then $(H,B)$ (or just $B$, if $H$ is understood) is called a \textbf{minimal counterexample} to the Malle--Robinson conjecture if 
\begin{enumerate}
\item the conjecture does not hold for $B$, and
\item the conjecture holds for all $\ell$-blocks $B'$ of groups $K$ with $|K/Z(K)|$ strictly smaller than $|H/Z(H)|$ having defect groups isomorphic to those of $B$.
\end{enumerate} 

%Recall that every finite quasi-simple group of Lie type is of the form $G^F/Z$ where $G$ is a simple, simply connected group defined over $\mathbb{F}_q$ with Frobenius endomorphism $F: G \to G$ and $Z \subseteq Z(G^F)$ is a central subgroup of $G$. We defined the finite quasi-simple groups of exceptional Lie type to be
%\begin{align*}
%&G_2(q),\text{} ^3\!D_4(q), F_4(q), E_6(q), E_6(q)/Z(E_6(q)), \text{}^2\!E_6(q), \text{}^2\!E_6(q)/Z(^2\!E_6(q)), \\
%& E_7(q), E_7(q)/Z(E_7(q)), E_8(q).
%\end{align*}

\begin{proof}[Proof of Corollary]
Suppose that $(H,B)$ is a minimal counterexample to the Malle--Robinson conjecture. Let $D$ be a defect group of $B$. By \cite[Proposition 6.4]{Malle-Robinson}, $H$ is not an exceptional covering group of a finite group of exceptional Lie type. By \cite[Proposition 6.5]{Malle-Robinson}, $H$ is not of Lie type $^2\!B_2$, $^2\!G_2$, $G_2$, $^3\!D_4$ or $^2\!F_4$. Hence, $H=G^F/Z$, where $G$ is a simple, simply connected group of exceptional type ($F_4,E_6,E_7$ or $E_8$), $F:G \to F$ is a Frobenius endomorphism and $Z \subseteq Z(G^F)$ is a central subgroup. By \cite[Proposition 6.1]{Malle-Robinson}, $\ell$ does not divide $q$. Let $B'$ be the unique block of $G^F$ that dominates $B$ and let $D'$ be a defect group of $B'$. In particular, $l(B)=l(B')$ and $s(D)=s(D')$. By \cite[Theorem 7.7]{BDR}, $B'$ is Morita equivalent to an $\ell$-block $b$ of a subgroup $N$ of $G^F$ and their defect groups are isomorphic. In particular, $l(B')= l(b)$ and $s(B')=s(b)$. If $s$ is not quasi-isolated, then $N$ is a proper subgroup. By the minimality of $(H,B)$, $B$ is therefore a quasi-isolated block of $H$. By \cite[Theorem B]{Ruwen}, $\ell$ is bad for $G$. By Theorem \ref{Theorem B}, if a minimal counterexample exists it would be $(E_8(q),B_1)$, where $B_1$ is one of the 2-blocks numbered $3_u$ or $8_u$.
\end{proof}

\bibliography{Literatur}

\begin{thebibliography}{10}

\bibitem{BDR}
C.~Bonnaf\'{e}, J.-F. Dat, and R.~Rouquier.
\newblock Derived categories and {D}eligne-{L}usztig varieties {II}.
\newblock {\em Ann. of Math. (2)}, 185(2):609--670, 2017.

\bibitem{Bonnafe-Michel}
C.~Bonnaf\'{e} and J.~Michel.
\newblock Computational proof of the {M}ackey formula for {$q>2$}.
\newblock {\em J. Algebra}, 327:506--526, 2011.

\bibitem{Bonnafe-Rouquier}
C.~Bonnaf\'{e} and R.~Rouquier.
\newblock Cat\'{e}gories d\'{e}riv\'{e}es et vari\'{e}t\'{e}s de
  {D}eligne-{L}usztig.
\newblock {\em Publ. Math. Inst. Hautes \'{E}tudes Sci.}, 97:1--59, 2003.

\bibitem{Bonnafe}
C.~Bonnafé.
\newblock Quasi-isolated elements in reductive groups.
\newblock {\em Comm. Algebra}, 33(7):2315--2337, 2005.

\bibitem{Broue}
M.~Brou{\'e} and J.~Michel.
\newblock Blocs et s\'eries de {L}usztig dans un groupe r\'eductif fini.
\newblock {\em J. Reine Angew. Math.}, 395:56--67, 1989.

\bibitem{Cabanes-Enguehardunipotent}
M.~Cabanes and M.~Enguehard.
\newblock Unipotent blocks of finite reductive groups of a given type.
\newblock {\em Math. Z.}, 213(3):479--490, 1993.

\bibitem{Cabanes-Enguehardtwisted}
M.~Cabanes and M.~Enguehard.
\newblock On blocks of finite reductive groups and twisted induction.
\newblock {\em Adv. Math.}, 145(2):189--229, 1999.

\bibitem{Cabanes-Enguehard}
M.~Cabanes and M.~Enguehard.
\newblock {\em Representation {T}heory of {F}inite {R}eductive {G}roups},
  volume~1 of {\em New Mathematical Monographs}.
\newblock University Press, Cambridge, 2004.

\bibitem{Carter}
R.W. Carter.
\newblock {\em Finite Groups of Lie Type: Conjugacy Classes and Complex
  Characters}.
\newblock Wiley Classics Library. Wiley, 1993.

\bibitem{Digne-Michel2}
F.~Digne and J.~Michel.
\newblock {Foncteurs de Lusztig et caractères des groupes linéaires et
  unitaires sur un corps fini}.
\newblock {\em J. Algebra}, 107(1):217 -- 255, 1987.

\bibitem{Digne-Michel}
F.~Digne and J.~Michel.
\newblock {\em Representations of Finite Groups of Lie Type}.
\newblock London Mathematical Society Student Texts. University Press,
  Cambridge, 1991.

\bibitem{Enguehard}
M.~Enguehard.
\newblock Sur les {$l$}-blocs unipotents des groupes r\'{e}ductifs finis quand
  {$l$} est mauvais.
\newblock {\em J. Algebra}, 230(2):334--377, 2000.

\bibitem{Geck2}
M.~Geck.
\newblock Basic sets of {B}rauer characters of finite groups of {L}ie type.
  {II}.
\newblock {\em J. London Math. Soc. (2)}, 47(2):255--268, 1993.

\bibitem{Geck-Hiss}
M.~Geck and G.~Hiss.
\newblock Basic sets of {B}rauer characters of finite groups of {L}ie type.
\newblock {\em J. Reine Angew. Math.}, 418:173--188, 1991.

\bibitem{GLS}
D.~Gorenstein, R.~Lyons, and R.~Solomon.
\newblock {\em The {C}lassification of the {F}inite {S}imple {G}roups. {N}umber
  3. {P}art {I}. {C}hapter {A}}, volume~40 of {\em Mathematical Surveys and
  Monographs}.
\newblock American Mathematical Society, 1998.

\bibitem{Hiss}
G.~Hiss.
\newblock Regular and semisimple blocks of finite reductive groups.
\newblock {\em J. London Math. Soc. (2)}, 41(1):63--68, 1990.

\bibitem{Ruwen}
Ruwen Hollenbach.
\newblock Basic sets for quasi-isolated blocks of finite groups of exceptional
  lie type.
\newblock arXiv:1905.10754, 2019.

\bibitem{Malle-Kessar}
R.~Kessar and G.~Malle.
\newblock Quasi-isolated blocks and {B}rauer's height zero conjecture.
\newblock {\em Ann. of Math. (2)}, 178(1):321--384, 2013.

\bibitem{Lusztig}
G.~Lusztig.
\newblock On the finiteness of the number of unipotent classes.
\newblock {\em Invent. Math.}, 34(3):201--213, 1976.

\bibitem{Malle-Robinson}
G.~Malle and G.~R. Robinson.
\newblock On the number of simple modules in a block of a finite group.
\newblock {\em J. Algebra}, 475:423--438, 2017.

\bibitem{Malle-Testerman}
G.~Malle and D.~Testerman.
\newblock {\em Linear Algebraic Groups and Finite Groups of Lie Type}.
\newblock Cambridge Studies in Advanced Mathematics. University Press,
  Cambridge, 2011.

\bibitem{Chevie}
J.~Michel.
\newblock The development version of the {\tt {chevie}} package of {\tt
  {gap}3}.
\newblock {\em J. Algebra}, 435:308--336, 2015.

\bibitem{Navarro}
G.~Navarro.
\newblock {\em Characters and {B}locks of {F}inite {G}roups}, volume 250 of
  {\em London Mathematical Society Lecture Note Series}.
\newblock Cambridge University Press, Cambridge, 1998.

\end{thebibliography}
\addcontentsline{toc}{section}{References}
\bibliographystyle{plain}

\end{document}